\newtheorem{tpcl}{Tpcl}[section]
\newtheorem{Lemma}[tpcl]{Lemma}
\newtheorem{Theorem}[tpcl]{Theorem}
\newtheorem{Proposition}[tpcl]{Proposition}
\newtheorem{Definition}[tpcl]{Definition}
\newtheorem{Example}[tpcl]{Example}
\newtheorem{Remark}[tpcl]{Remark}
\begin{document}

\title[$\mathbb{Z}_2$-Thurston norms and pseudo-horizontal surfaces]{On $\mathbb{Z}_2$-Thurston norms and pseudo-horizontal surfaces in orientable Seifert $3$-manifolds}

\author{Xiaoming Du}
\address{School of Mathematics, South China University of Technology,
Guangzhou, 510640, PR China}
\email{scxmdu@scut.edu.cn}
\thanks{This research is supported by Fundamental Research Funds for the Central Universities in China. The author would like to thank Stephan Tillmann and Hyam Rubinstein for valuable comments. The author also thanks Adam Levine, Zhongtao Wu, Jingling Yang for their helps on the calculation of the $d$-invariant or correction term. The author is grateful to Xuezhi Zhao and the referee for their help to clarify many statements and proofs. }

\date{}

%  Math Subject Classifications
\subjclass[2020]{57K35}%
\keywords {Seifert manifold; geometric incompressible surface; pseudo-horizontal surface; $\mathbb{Z}_2$-Thurston norm; minimal genus}

\maketitle

\begin{abstract}
We describe a general method to compute the $\mathbb{Z}_2$-Thurston norm for every $\mathbb{Z}_2$-homology class in an orientable Seifert manifold with orientable orbit surface. Our main tools are pseudo-horizontal surfaces. We give a necessary and sufficient criterion for the existence of pseudo-horizontal surfaces, calculate the non-orientable genera for such surfaces, and detect their $\mathbb{Z}_2$-homology classes. We then describe an algorithm to calculate the $\mathbb{Z}_2$-Thurston norm of each $\mathbb{Z}_2$-homology classes. We also present several interesting examples.
\end{abstract}

\section{Introduction}

Let $M$ be a closed orientable irreducible connected 3-manifold and $\alpha$ be a non-zero element in $H_2(M;\mathbb{Z}_2)$. Let $S$ be a closed embedded surface representing $\alpha$. Such an $S$ can be non-orientable. It can also be disconnected. We can define the \textbf{$\mathbb{Z}_2$-Thurston norm} of $\alpha$, i.e.,
$$
\| \alpha \|_{\mathbb{Z}_2-Th} = \underset{[S] = \alpha }{\min} \left\{ \sum_i \max \{0, -\chi(S_i)\} \right\},
$$
where $S$ varies over all surfaces that represent $\alpha$, $S_i$'s are the components of $S$, $\chi(S_i)$ is the Euler characteristic, and the summation is over all components. A lower bound for the complexity of $M$ (i.e., the minimum number of tetrahedra in the pseudo-simplicial triangulations of the $M$) is given by calculating the $\mathbb{Z}_2$-Thurston norm of every non-zero element in a rank one or two subgroup of $H_2(M;\mathbb{Z}_2)$, see \cite{JRT,JRST}. The surface $S$ representing $\alpha$ is said to be \textbf{$\mathbb{Z}_2$-taut} if (1) no component of $S$ is a sphere or projective plane and (2) $\chi(S) = -\| \alpha \|_{\mathbb{Z}_2-Th}$.

When $S$ is $\mathbb{Z}_2$-taut, as mentioned in \cite{JRT}, every connected component of $S$ must be geometrically incompressible, i.e. it does not have any compressing disk. Conversely, for a closed embedded non-orientable surface $S$ representing $\alpha$ ($S$ can be disconnected), how do we know whether it is geometrically incompressible? The only `easy' way is to check if it is $\mathbb{Z}_2$-taut. We want to determine the $\mathbb{Z}_2$-Thurston norm for every element in $H_2(M;\mathbb{Z}_2)$. When $S$ is a closed embedded connected non-orientable $\mathbb{Z}_2$-taut surface, the $\mathbb{Z}_2$-Thurston norm of $[S]=\alpha$ is $g(S)-2$, where $g(S)$ is the non-orientable genus of $S$, i.e., the number of $\mathbb{R}P^2$'s in the connected sum decomposition of $S$. The non-orientable genus of such an $S$ is minimal among all connected embedded non-orientable surfaces representing $\alpha$. So in this paper when we detect whether a closed connected embedded non-orientable surface is $\mathbb{Z}_2$-taut, we use both the non-orientable genus and the Euler characteristic of the surface.

The problem of determining the minimal non-orientable genus of an embedded non-orientable surface representing a given $\mathbb{Z}_2$-homology class has a long history. Bredon and Wood \cite{BW} solved this problem for lens spaces. Ni and Wu \cite{NW} gave a lower bound to the genus by using Heegaard Floer homology. Levine, Ruberman and Strle pointed out that the lower bound is not sharp and gave an example (\cite[Proposition 7.4]{LRS}). We use a more geometric and elementary method to calculate the exact value of the $\mathbb{Z}_2$-Thurston norms for orientable Seifert manifolds.

Our calculation depends on special types of closed embedded non-orientable surfaces in orientable Seifert manifolds. They are pseudo-vertical surfaces and pseudo-horizontal surfaces defined by Frohman \cite{Fr}. In an orientable Seifert manifold, for the non-orientable surfaces, Frohman has proved that a geometrically incompressible closed connected embedded non-orientable surface is isotopic to a pseudo-vertical surface or a pseudo-horizontal surface. For the orientable surfaces, it is known that an incompressible closed connected embedded orientable surface is isotopic to a vertical surface or a horizontal surface (see \cite[Proposition 1.11]{Ha}). Since the $\mathbb{Z}_2$-taut surface is geometrically incompressible, for a given non-zero element in $H_2(M;\mathbb{Z}_2)$, we only need to find all vertical, horizontal, pseudo-vertical, and pseudo-horizontal surfaces that represent such an element and compare their Euler characteristics.

Vertical surfaces and horizontal surfaces have been classified (see \cite[Section 2.1]{Ha}). Pseudo-vertical surfaces have been studied by Frohman \cite{Fr}. There is no literature on pseudo-horizontal surfaces. We will study pseudo-horizontal surfaces systematically, calculate their genera, and detect their $\mathbb{Z}_2$-homology classes. Their genera rely on a recursive function $N(2k,q)$ which is defined in \cite{BW}, also see Section 2. Our calculation shows that in many cases the pseudo-vertical surfaces are $\mathbb{Z}_2$-taut, but sometimes they are not. We find several families of Seifert manifolds. In each of these manifolds, there exists a pseudo-horizontal surface whose genus is less than the genus of the pseudo-vertical surface in the same $\mathbb{Z}_2$-homology class. The difference of their genera is 2. We also find an example that a pseudo-vertical surface is isotopic to a pseudo-horizontal surface (recall that in Seifert manifolds vertical surfaces are not isotopic to horizontal surface).
%See section \ref{section-eg}.

We organize this paper as follows. In Section 2, we illustrate the constructions of pseudo-vertical surfaces and pseudo-horizontal surfaces. In Section 3, we give a criterion for the existence for pseudo-horizontal surfaces in orientable Seifert fibered manifolds. We also calculate the genera of the pseudo-horizontal surfaces. In Section 4, we discuss the $\mathbb{Z}_2$-homology groups of the orientable Seifert manifolds and represent the $\mathbb{Z}_2$-homology classes by pseudo-vertical surfaces and pseudo-horizontal surfaces. We also show how to detect the $\mathbb{Z}_2$-homology class for a given pseudo-horizontal surface. Actually we can use these to calculate the $\mathbb{Z}_2$-cohomology rings. In Section 5, we give several interesting examples of pseudo-horizontal surfaces. In Section 6, we study some properties of the function $N(2k,q)$. These properties play crucial roles in the calculation for the $\mathbb{Z}_2$-Thurston norms. In Section 7, we give a deterministic algorithm to calculate the $\mathbb{Z}_2$-Thurston norms for orientable Seifert manifolds.

%In certain types of small Seifert manifolds, the calculations for the $\mathbb{Z}_2$-Thurston norm in \cite{JRT} and \cite{JRST} only consider pseudo-vertical surfaces. In this paper, we point out that for a non-zero element in the $H_2(M;\mathbb{Z}_2)$ for some small Seifert manifold $M$, there exists a pseudo-horizontal surface whose genus is less than the pseudo-vertical surface in the same mod 2 homology class. See the examples in the final section. Hence the calculations in \cite{JRT} and \cite{JRST} are incomplete. Then we calculate the genera for all pseudo-horizontal surfaces to verify the claim in \cite{JRT}.

\section{Pseudo-vertical and pseudo-horizontal surfaces}

\subsection{Notations for Seifert fibered manifolds.}

We only consider closed orientable Seifert fibered manifolds whose orbit space is orientable. We fix the following notations and recall some of the known results. For details, see \cite[Chapter 10]{Ma} or \cite[Chapter 2]{Ha}.
\begin{itemize}
  \item $S^1$: a circle.
  \item $\Sigma_g^n$: an orientable compact connected genus $g$ surface with $n$-punctures.
  \item $\Sigma_g$: a closed orientable connected genus $g$ surface.
  \item $M_0$: $\Sigma_g^n \times S^1$.
  \item $T_i (i = 1,\cdots,n)$: the $i$-th boundary of $\partial M_0$, homeomorphic to a torus.
  \item $v_i (i = 1,\cdots,n)$: the simple closed curve on $T_i$ that goes along the $S^1$ direction. The letter ``v'' stands for ``vertical''.
  \item $h_i (i = 1,\cdots,n)$: the simple closed curve on $T_i$ that lies on a fixed horizontal section $\Sigma_g^n \times \{x\}$ and satisfying $[h_1] + \cdots + [h_n] = 0 \in H_1(M_0; \mathbb{Z})$. The letter ``h'' stands for ``horizontal''.
  \item $R_i (i = 1,\cdots,n)$: each $R_i$ is a solid torus.
  \item $m_i (i = 1,\cdots,n)$: the simple closed curve on $\partial R_i$ that bounds a disk in $R_i$. The letter ``m'' stands for ``meridian''.
  \item $l_i (i = 1,\cdots,n)$: a simple closed curve on $\partial R_i$ that intersects with $m_i$ once. The letter ``l'' stands for ``longitude''.
  \item $f_i (i = 1,\cdots,n)$: the identifying map $\partial R_i \to \partial M_0$.
  \item $\alpha_i,\beta_i,\gamma_i,\delta_i (i = 1,\cdots,n)$: integers satisfying
    $$
    (f_i)_*:  \left( \begin{matrix} [m_i] \\ [l_i] \end{matrix}\right)  \mapsto
    \left(\begin{matrix} \alpha_i & \beta_i \\ \gamma_i & \delta_i \end{matrix}\right)\left( \begin{matrix} [h_i] \\ [v_i] \end{matrix}\right),
    $$
    $$
    (f_i^{-1})_*:  \left( \begin{matrix} [h_i] \\ [v_i] \end{matrix}\right)  \mapsto
    \left(\begin{matrix} \delta_i & -\beta_i \\ -\gamma_i & \alpha_i \end{matrix}\right)\left( \begin{matrix} [m_i] \\ [l_i] \end{matrix}\right),
    $$
    where $\alpha_i \delta_i - \beta_i \gamma_i = 1$ and $\alpha_i \neq 0$.
  \item $M$: $M = M_0 \bigcup_{f_1} R_1 \bigcup_{f_2} \cdots \bigcup_{f_n} R_n$.
\end{itemize}

$M_0$ can be fibered along the $S^1$ direction. The fibration can extend into the solid tori $R_i$'s. Inside $R_i$ there is a singular fiber with multiplicity $\alpha_i$. The gluing result $M$ has a Seifert fibered structure. Following the notation in \cite{Ma}, we denote it by $(\Sigma_g,(\alpha_1,\beta_1),\cdots,(\alpha_n,\beta_n))$.

In the literatures, such a $(\Sigma_g,(\alpha_1,\beta_1),\cdots,(\alpha_n,\beta_n))$ has different notations. In Hatcher \cite{Ha}, its notation is $M(+g,0;\beta_1/\alpha_1,\cdots,\beta_n/\alpha_n)$. In Orlik \cite{Or}, its notation is $[e,(o_1,0);(\alpha_1,\beta'_1),\cdots,(\alpha_n,\beta'_n)]$, where (1) $e$ is an integer satisfying $e + \sum \beta'_i/\alpha_i = \sum \beta_i/\alpha_i$ and (2) for each $i$ we have $\alpha_i>0$ and $0 < \beta_i' < \alpha_i$. In some literatures such as \cite{BHZZ} it is also denoted by $(O,o,g \mid e:(\alpha_1,\beta_1'),\cdots,(\alpha_n,\beta_n'))$, where $e,\beta'_i$ are the same as above.

\begin{Remark}\label{rem-Seifert-notation}
In the notation of this paper, we require $\alpha_i \geqslant 2$ for each $i$. If there is some $\alpha_i=1$ (for simplicity, $\alpha_1=1$), then we cancel $(\alpha_1,\beta_1)$ by the following rule:
$$
\begin{array}{rcl}
& & (\Sigma_g,(1,\beta_1),(\alpha_2,\beta_2),(\alpha_3,\beta_3),\cdots) \\
&\cong& (\Sigma_g,(1,0),(\alpha_2,\beta_2 + \beta_1 \cdot \alpha_2),(\alpha_3,\beta_3),\cdots) \\
&\cong& (\Sigma_g,(\alpha_2,\beta_2 + \beta_1 \cdot \alpha_2),(\alpha_3,\beta_3),\cdots)
\end{array}
$$
\end{Remark}

\subsection{Geometrically incompressible surfaces in a solid torus}

Suppose $R$ is a solid torus $D^2 \times S^1$, $m$ is the meridian circle on $\partial R$, $l$ is a simple closed curve on $\partial R$ that intersect $m$ once.
Bredon and Wood \cite[Section 6,7,8]{BW} and Rubinstein \cite[Theorem 13]{Ru} treated the geometrically incompressible surfaces (which is non-orientable and with one boundary) in $R$. Their results are as follows.

\begin{Proposition}[Bredon-Wood, Rubinstein]\label{prop-Bredon-Wood}
Let $R,m,l$ be as previous.
\begin{enumerate}
  \item If $S$ is a geometrically incompressible surface in $R$, $\partial S$ lies on $\partial R$, and $[\partial S] = p [l] + q [m]$ ($p$ is coprime to $q$), then $p$ must be even.
  \item On $\partial R$, suppose $c$ is a simple closed curve whose isotopy class is $2k [l] + q [m]$ ($k \neq 0$, $q$ is coprime to $2k$). Then inside $R$, up to isotopy, there exists a unique geometrically incompressible non-orientable surface $S$ with the boundary curve as $c$.
\end{enumerate}
\end{Proposition}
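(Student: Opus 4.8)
The plan is to prove the two parts separately. Part (1) is an elementary parity statement and does not even require incompressibility; part (2) splits into an existence half, handled homologically, and a uniqueness half, handled by normalizing a surface against a meridian disk. For part (1): isotope the curve(s) $\partial S$ on $\partial R$ into minimal position with respect to the meridian $m$, extend this to an ambient isotopy of $R$, and pick a meridian disk $D$ with $\partial D=m$ transverse to $S$. Since $S$ and $D$ meet $\partial R$ only in their own boundaries, $S\cap D$ is a $1$-manifold properly embedded in the disk $D$ whose boundary points are exactly $\partial S\cap m$; as such a $1$-manifold has an even number of boundary points, and $|\partial S\cap m|$ in minimal position realizes the mod-$2$ intersection number $[\partial S]\cdot[m]\equiv p$, we get that $p$ is even. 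Equivalently, $[\partial S]=\partial[S]$ and $[m]=\partial[D]$ both lie in the image of $H_2(R,\partial R;\mathbb Z_2)\to H_1(\partial R;\mathbb Z_2)$, which is isotropic for the intersection form, so $p\equiv[\partial S]\cdot[m]=0$.

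For existence in part (2): since $p=2k$ is even, $[c]=0$ in $H_1(R;\mathbb Z_2)$, so $c$ bounds a properly embedded (possibly non-orientable, possibly disconnected) surface $\Sigma$ with $\partial\Sigma=c$. Maximally compress $\Sigma$ and delete the resulting closed components; this never disturbs the single boundary curve $c$ and never turns its component into a disk or sphere — a disk would give $[c]=0$ in $\pi_1(R)=\mathbb Z$, i.e. $2k=0$ — so we reach a connected geometric incompressible surface $S$ with $\partial S=c$. Such an $S$ is forced to be non-orientable: otherwise it carries a class $[S]\in H_2(R,\partial R;\mathbb Z)$ with $\partial[S]=[c]$, which would kill $[c]$ in $H_1(R;\mathbb Z)=\mathbb Z$, contradicting $2k[l]+q[m]\mapsto 2k\neq 0$. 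Hence a geometric incompressible one-sided surface with boundary $c$ exists. Identifying $g(S)$ with the recursive quantity $N(2k,q)$ of Section 2 is then a separate Euler-characteristic computation, carried out along the continued-fraction reduction of $(2k,q)$.

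For uniqueness, which I expect to be the main obstacle: let $S'$ be any geometric incompressible one-sided surface with $\partial S'=c$, put it transverse to a fixed meridian disk $D$ with $|\partial S'\cap m|=2|k|$ minimal, and consider $S'\cap D$. An innermost-disk argument, using geometric incompressibility of $S'$ and irreducibility of $R$, removes all circle components of $S'\cap D$, leaving a system of arcs whose endpoints are the $2|k|$ points of $c\cap m$. Cutting $R$ open along $D$ turns $R$ into a $3$-ball $B$ and turns $S'$ into a surface properly embedded in $B$ with boundary pattern on $\partial B$ prescribed by $c$ and these arcs. The heart of the matter is to show that the arc pattern, and then the surface in $B$, is determined up to isotopy by the boundary slope alone, so that regluing along $D$ recovers the surface $S$ constructed above; this simultaneously shows that $S$ carries no compressing disk. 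Controlling this intersection pattern with a meridian disk is exactly the point at which incompressibility is used, and is the technical core of Bredon--Wood \cite{BW} and Rubinstein \cite{Ru}; in practice I would instead put $S'$ in normal form with respect to a fixed triangulation of $R$ and invoke uniqueness of the minimal-weight normal surface realizing the given boundary slope.
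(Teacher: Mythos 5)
The paper does not actually prove this proposition; it is quoted from Bredon--Wood \cite{BW} and Rubinstein \cite{Ru}, so your attempt can only be judged against those sources. Your part (1) is correct and complete: the mod-$2$ count of boundary points of the $1$-manifold $S\cap D$ (equivalently, the isotropy of the image of $H_2(R,\partial R;\mathbb{Z}_2)$ under the intersection form) gives $p\equiv [\partial S]\cdot[m]\equiv 0\pmod 2$, and indeed incompressibility is not needed. The existence half of part (2) is also essentially complete: $[c]=0$ in $H_1(R;\mathbb{Z}_2)$ gives an embedded bounding surface, maximal compression preserves $\partial\Sigma=c$ and cannot produce a disk since $2k\neq 0$ in $\pi_1(R)=\mathbb{Z}$, and orientability is excluded by $H_1(R;\mathbb{Z})$.

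The uniqueness half, however, contains a genuine gap, and you have correctly located it but not closed it. After removing circles of $S'\cap D$ by innermost-disk arguments, the assertion that ``the arc pattern, and then the surface in $B$, is determined up to isotopy by the boundary slope alone'' is essentially the theorem itself: the $2|k|$ points of $c\cap m$ admit Catalan-many crossingless matchings by arcs in $D$, and one must argue which patterns are realizable by an embedded incompressible surface and why exactly one survives. Your proposed fallback --- uniqueness of the minimal-weight normal surface realizing a given boundary slope --- is not an available black box: least-weight normal surfaces with prescribed boundary are not unique in general (uniqueness of the isotopy class is precisely what is to be proved, so this is circular). The arguments of \cite{BW} and \cite{Ru} instead run an induction that mirrors the recursion $N(2k,q)=N(2(k-Q),q-2m)+1$ of Proposition \ref{prop-continue-fraction}: one shows that an incompressible $S'$ with boundary slope $(2k,q)$, $q>1$, admits a boundary-compression (isotopy across $\partial R$) reducing it to an incompressible surface of slope $(2(k-Q),q-2m)$ with one crosscap removed, and that for the base slope $(2k,1)$ the surface is the visible one; uniqueness then follows by induction on the continued-fraction length. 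Without that descent step (or some substitute), your uniqueness argument does not go through.
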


\begin{Proposition}[Bredon-Wood]\label{prop-value-of-N}
Under the same conditions and notations as the previous proposition, we denote the genus of the geometrically incompressible non-orientable surface by $N(2k,q)$. If $2k>q>0$, then $N(2k,q)$ is given by the following two equivalent ways:
\begin{enumerate}
  \item Recursive formula: $N(2k,1)=k$, $N(2k,q) = N(2(k-Q),q-2m)+1$, where $Q,m$ satisfy $2km - Qq = \pm 1$ and $0 < Q < k$.
  \item Suppose $2k/q$ can be written as a continued fraction
  $$
  \begin{array}{rcl}
    2k/q
    & = & [a_0,a_1,\dots,a_n] \\
    & = & a_0 + \cfrac{1}{a_1 +
            \cfrac{1}{a_2 +
              \cfrac{1}{
                \ddots + \cfrac{1}{a_n}
                }
            }
          }
  \end{array}
  $$
  where $a_i$'s are integers, $a_0 \geq 0$, $a_i>0$ for $1 \leq i \leq n$, and $a_n>1$. Add $a_i$ successively except that when a partial sum is even we skip the next $a_i$. That is, define inductively $b_0 = a_0$,
  $$
  b_i = \left\{ \begin{array}{ll}a_i, & b_{i-1} = 0 \text{ or } \sum_{j=0}^{i-1} b_j \text{ is odd}, \\ 0, & b_{i-1} \neq 0 \text{ and } \sum_{j=0}^{i-1} b_j \text{ is even}. \end{array}\right.
  $$
  Then $N(2k,q) = \frac{1}{2} \sum_{i=0}^{n} b_i.$
\end{enumerate}
\end{Proposition}

\begin{Remark}
By the genus of a connected non-orientable surface with boundaries, we mean the non-orientable genus of the closed connected non-orientable surface obtained by capping off each component of the boundary with a disk.
\end{Remark}

\begin{Remark}
Each of these geometrically incompressible surfaces in the solid torus is boundary compressible. Repeated boundary compressions reduce it to a meridian disk.
\end{Remark}

\begin{Remark}
In the definition of $N(2k,q)$, the condition $k \neq 0$ can be removed if we define $N(0,1)=0$ since the meridian $m$ bounds a disk in $R$.
\end{Remark}

\begin{Remark}\label{rem-recursive-of-N}
In fact, $N(2k,q)$ is also the minimal genus of the closed connected embedded non-orientable surface in the lens space $L(2k,q)$ representing the non-zero $\mathbb{Z}_2$-homology class.
As mentioned in \cite{BW},
by the homeomorphism classification for lens space and other geometric constructions, the function $N(2k,q)$ also satisfies the following identities:
\begin{enumerate}
  \item $N(-2k,-q) = N(2k,q)$ (hence for general integers $k \neq 0$ and $q$, we can always normalize them such that $k>0$).
  \item $N(2k,q+2k) = N(2k,q)$ (hence for general integers $k$ and $q$ satisfying $k>0$, we can always normalize them such that $0<q<2k$).
  \item $N(2k,q) = N(2k,2k-q)$ (hence for general integers $k$ and $q$ satisfying $0<q<2k$, we can always normalize them such that $0<q<k$).
  \item $N(2k,q) = N(2k,q')$, where $0<q,q'<2k$, $qq' \equiv 1 \pmod{2k}$.
  \item $N(2k+2hq,q) = h + N(2k,q)$, where $0<q<2k, h>0$.
\end{enumerate}
The above properties together are in fact equivalent to the recursive formula of $N(2k,q)$ in Proposition \ref{prop-value-of-N}. For the detail of the equivalence, see \cite{BW}. Section 6 of our paper will discuss more properties of $N(2k,q)$.
\end{Remark}

\begin{Lemma}\label{lem-two-surfaces-inside-solid-torus}
Suppose: (1) $c_1, c_2$ are two parallel simple closed curves on $\partial R$ whose isotopy class is $2k[l] + q[m]$ ($k \neq 0$), and (2) $S_1$ and $S_2$ are two connected embedded non-orientable surfaces inside $R$ with $\partial S_1 = c_1, \partial S_2 = c_2$. Then $S_1 \bigcap S_2 \neq \varnothing$.
\end{Lemma}

\begin{proof}
The fundamental group of $R$ is $\mathbb{Z}$. Take the covering space $\widetilde{R}$ corresponding to the subgroup $2\mathbb{Z}$.
Denote the lifting of $S_1,S_2,m,l$ in $\widetilde{R}$ by $\widetilde{S}_1, \widetilde{S}_2, \widetilde{m}, \widetilde{l}$ respectively.
Then $\widetilde{S_1}$ is a two-sided surface. $\partial \widetilde{S}_1$ consists of two simple closed curves. $\widetilde{R} \setminus \widetilde{S}_1$ is disconnected. Both sides have points in $\partial \widetilde{S}_2$. Hence $\widetilde{S_1} \bigcap \widetilde{S_2} \neq \varnothing$.
\end{proof}

\subsection{Pseudo-vertical surfaces and pseudo-horizontal surfaces}

Suppose $M$ is an orientable Seifert fibered manifold with orientable orbit space. We follow the notation at the beginning of Section 2. The following definitions are taken from the literature, such as \cite{Ha} and \cite{Fr}.

\begin{Definition}\label{def-pseudo}
An embedded orientable surface in $M$ is called \textbf{vertical} if it consists of regular fibers. An embedded orientable surface in $M$ is called \textbf{horizontal} if it intersects each regular fiber. An embedded non-orientable surface $S$ in $M$ is called \textbf{pseudo-vertical} if: (1) $S \cap M_0$ is a vertical annulus whose boundary lies in two distinct $T_i$ and $T_j$, and (2) $S \cap R_i$ and $S \cap R_j$ are geometrically incompressible non-orientable surfaces inside the solid torus $R_i$ and $R_j$ respectively. For simplicity, we also called it the pseudo-vertical surface connecting $R_i$ and $R_j$. We denote such a pseudo-vertical surface by $V_{i,j}$. An embedded non-orientable surface $S$ in $M$ is called \textbf{pseudo-horizontal} if: (1) $S \cap M_0$ is horizontal in $M_0$, and (2) $S \cap R_i$ is either a family of meridian disks or a geometrically incompressible non-orientable surface in $R_i$ for each $i$.
\end{Definition}

For example, when there are some $i,j$ such that $(\alpha_i,\beta_i)=(\alpha_j,\beta_j)=(2,1)$, Figure 1 shows how a pseudo-vertical surface $V_{i,j}$ looks like in the Seifert fibered manifold $(\Sigma_g,\cdots,(2,1),\cdots,(2,1),\cdots)$. The upper side should be identified with the bottom side. Figure 1-(left) shows the local of $M_0$ and the vertical annulus $S \cap M_0$. Figure 1-(right) shows two M\"obius bands in solid tori $R_i,R_j$. The pseudo-vertical surface $V_{i,j}$ is a Klein bottle.
\begin{figure}[htbp]
\begin{center}
\includegraphics[height=6cm]{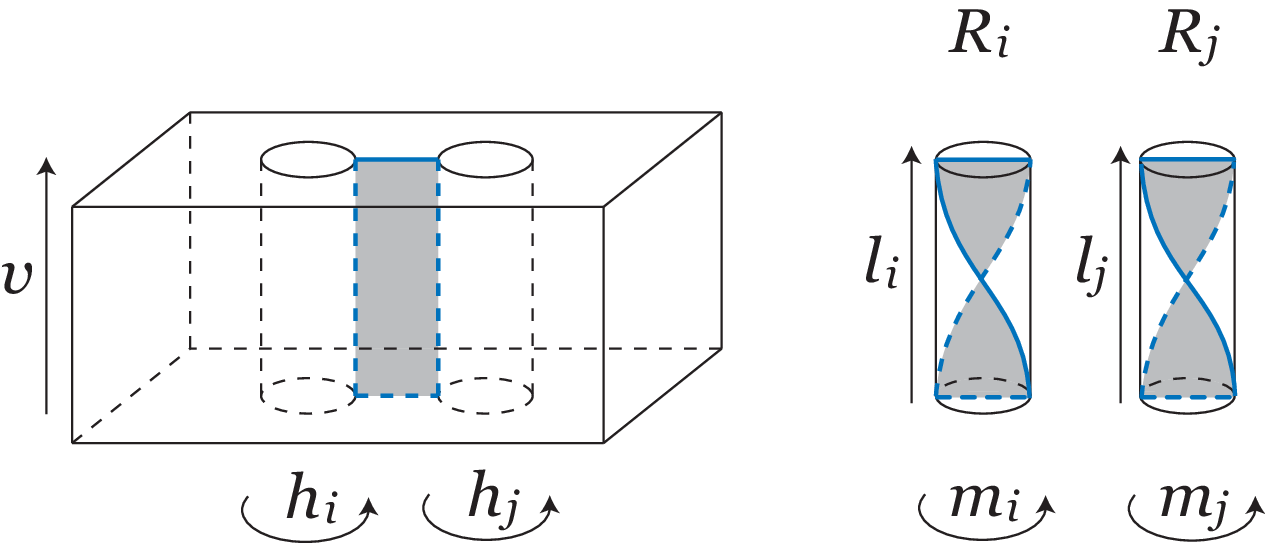}\\
Figure 1.
\end{center}
\end{figure}

\begin{Remark}
The pseudo-horizontal surfaces have a more geometric description. Jaco \cite{Ja} constructed what is now called ``staircase'' surfaces in $M_0 = \Sigma_g^n \times S^1$. Then a pseudo-horizontal surface is obtained by attaching either disks or the geometrically incompressible non-orientable surfaces in $R_i$'s to such a staircase surface. A more concrete picture in a specific Seifert fibered manifold is shown in Figure 3 of Example \ref{first-example} in our paper.
\end{Remark}

\begin{Lemma}\label{lem-genus-pseudo-vertical}
The existence of the pseudo-vertical surface $V_{i,j}$ requires that $\alpha_i,\alpha_j$ are even, and the genus of $V_{i,j}$ is $N(\alpha_i,-\gamma_i)+N(\alpha_j,-\gamma_j) = N(\alpha_i,\beta_i)+N(\alpha_j,\beta_j)$.
\end{Lemma}
\begin{proof}
Under the attaching map, the homology class of the regular fiber $[v]$ is mapped to the class $-\gamma_i[m_i]+\alpha_i[l_i]$ on $\partial R_i$. By Proposition \ref{prop-Bredon-Wood} and Proposition \ref{prop-value-of-N}, it can bound a geometrically incompressible surface inside $R_i$ if and only if $\alpha_i$ is even, and the geometrically incompressible surface has genus $N(\alpha_i,-\gamma_i)$. Since $\alpha_i\delta_i-\beta_i\gamma_i=1$, by Remark \ref{rem-recursive-of-N}, we have $N(\alpha_i,-\gamma_i)=N(\alpha_i,\beta_i)$. The situation for $R_j$ is similar.
\end{proof}

%\begin{Lemma}
%The horizontal surfaces in Seifert fibered manifolds are incompressible.
%\end{Lemma}
%
%\begin{proof}
%If a surface $F$ is horizontal, then $\sum \beta_i/\alpha_i = 0$. The Seifert fibered manifold $M$ can be finitely covered by some $\Sigma \times S^1$, where $\Sigma$ is a surface. $F$ can lift to $\Sigma \times \{x\}$, which is incompressible in $\Sigma \times S^1$. If $F$ is compressible, then the lifting is also compressible. Contradiction.
%\end{proof}
%
%Waldhausen \cite{Wa} proved a structure theorem for the incompressible orientable surfaces in Seifert manifolds. Also see \cite[Proposition 1.11]{Ha}.
%
%\begin{Theorem}
%Every incompressible and boundary incompressible surface in a Seifert fibered manifold $M$ is isotopic to a vertical surface or a horizontal surface.
%\end{Theorem}

Using pseudo-vertical surfaces and pseudo-horizontal surfaces, Frohman proved structure theorems for the geometrically incompressible non-orientable surfaces in Seifert manifolds.

\begin{Theorem}\label{thm-standard-form-of-geometric-incompressible}\cite[Theorem 2.5]{Fr}
Every closed geometrically incompressible non-orientable surface in a Seifert fibered manifold $M$ is isotopic to a pseudo-vertical surface or a pseudo-horizontal surface.
\end{Theorem}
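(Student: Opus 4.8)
The plan is to adapt the classical Waldhausen--Rubinstein argument for essential surfaces in Seifert fibered spaces to the one-sided setting: put $S$ into a normal form with respect to the fibration, so that the three vertical tori $T = T_1 \cup T_2 \cup T_3$ cut $S$ into pieces lying in the product region $M_0$ and in the solid tori $R_1, R_2, R_3$, identify each piece, and then reassemble. Concretely, I would first isotope $S$ to meet $T$ transversely with $|S \cap T|$ minimal. Since $M$ is irreducible and $S$ is geometric incompressible, a standard innermost-disk argument on the tori $T_i$ shows that in minimal position no component of $S \cap T$ bounds a disk either in some $T_i$ or in $S$; hence each component of $S \cap T_i$ is essential on $T_i$, and the components of $S \cap T_i$ are mutually parallel. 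Because a compressing disk for a component of $S \cap M_0$ inside $M_0$, or for a component of $S \cap R_i$ inside $R_i$, is also a compressing disk for $S$ in $M$, every such piece is geometric incompressible in its own submanifold. The one delicate point, postponed to the end, is that a boundary compression of $S \cap M_0$ across a torus $T_i$ trades intersection with $T_i$ for surface pushed into $R_i$, so the interplay between boundary compressions in $M_0$ and the pieces $S \cap R_i$ must be handled directly.

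Next I would classify the solid-torus pieces. By the Bredon--Wood--Rubinstein classification of geometric incompressible surfaces in a solid torus stated above (see \cite{BW,Ru}), together with Lemma \ref{lem-inside-solid-torus}, each $S \cap R_i$ is, up to isotopy in $R_i$, either empty, a nonempty family of parallel meridian disks, or a single copy of the unique one-sided geometric incompressible surface in $R_i$, which has connected boundary: two parallel copies of that surface would have to intersect by Lemma \ref{lem-inside-solid-torus}, and a one-sided piece and a meridian disk cannot both occur since the components of $S \cap T_i$ are parallel while their boundary slopes differ. Consequently $S \cap T_i$ is empty, a family of parallel meridians, or a single essential curve in the slope of the one-sided surface, so the behaviour of $S$ along each $T_i$ is pinned down.

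For the product region I would invoke the Waldhausen-type classification of essential surfaces in $M_0 = B_0 \times S^1$ (after the boundary-compression analysis establishes the required boundary-incompressibility): such a surface is isotopic either to a horizontal surface, that is, a finite unbranched cover of $B_0$, which is therefore orientable and meets every $T_i$, or to a vertical surface, a disjoint union of vertical annuli and tori. In the vertical case no closed vertical torus can occur, for it would be a closed component of the connected surface $S$, forcing $S$ to be a torus and contradicting one-sidedness; thus $S \cap M_0$ is a union of vertical annuli, and since, by the previous paragraph, each $T_i$ carries at most one vertical boundary curve of $S$, there is exactly one such annulus, joining two distinct tori $T_i$ and $T_j$, capped in $R_i$ and $R_j$ by one-sided surfaces, so $S = V_{i,j}$ is pseudo-vertical. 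In the horizontal case $S \cap M_0$ is, up to isotopy, a staircase surface in the sense of Jaco \cite{Ja}, capped in each $R_i$ either by a family of meridian disks or by the one-sided surface, so $S$ is pseudo-horizontal.

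The principal difficulty is the bookkeeping that upgrades these piecewise statements to an ambient isotopy of $S$ in $M$: one must track how boundary compressions of $S \cap M_0$ across the $T_i$ interact with the pieces $S \cap R_i$, showing that any such compression is either absorbed by the adjacent solid torus or can be used to decrease $|S \cap T|$, and one must have the full Waldhausen-type classification available in the exact form needed. The possibly non-orientable case of $S \cap M_0$ causes no extra trouble, since $M_0 = B_0 \times S^1$ has torsion-free first homology and hence carries no essential one-sided surface, so all of the non-orientability of $S$ is localized in the solid tori. The remaining special configurations follow from the solid-torus analysis of the second paragraph: $S \cap M_0$ cannot be empty, else $S$ would be a closed, hence compressible, surface in a single $R_i$; and a vertical annulus cannot have both ends on one $T_i$, since that would force $S \cap R_i$ to have two parallel vertical boundary curves, which the classification forbids.
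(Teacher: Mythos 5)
The paper offers no proof of this statement: it is quoted directly from Frohman \cite[Theorem 2.5]{Fr}, so your sketch has to stand on its own. Your skeleton --- isotope $S$ to meet the vertical tori $T=T_1\cup T_2\cup T_3$ minimally, classify the solid-torus pieces by Bredon--Wood--Rubinstein together with Lemma \ref{lem-inside-solid-torus}, classify $S\cap M_0$ by a Waldhausen-type horizontal/vertical dichotomy, and reassemble --- is indeed the right one, and the solid-torus analysis and the counting in the vertical case are fine. But there is a genuine gap exactly where you flag ``the principal difficulty'': the boundary-compression analysis is never carried out. The horizontal/vertical classification in $B_0\times S^1$ applies to surfaces that are incompressible \emph{and boundary-incompressible}, and $S\cap M_0$ has no a priori reason to be boundary-incompressible; for a genuinely pseudo-horizontal $S$ the staircase piece typically does admit boundary compressions toward the $T_i$, which are prevented from simplifying $S$ only by the one-sided pieces sitting inside the $R_i$. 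Showing that every boundary compression of $S\cap M_0$ either reduces $|S\cap T|$ (contradicting minimality) or is absorbed by the adjacent solid torus is the technical heart of Frohman's proof (and of Rubinstein's one-sided splitting arguments it builds on); announcing that it ``must be handled directly'' and then not handling it leaves the main step unproved.

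A second, smaller error: you justify the two-sidedness of $S\cap M_0$ by saying that $B_0\times S^1$ has torsion-free first homology and ``hence carries no essential one-sided surface.'' That implication fails for surfaces with boundary: a solid torus also has torsion-free $H_1$, yet it contains the one-sided Bredon--Wood surfaces around which your whole proof is organized. The correct reasons (a horizontal surface in $B_0\times S^1$ is an unbranched cover of the orientable $B_0$ transverse to the fibers of an orientable manifold, hence two-sided; vertical surfaces in a trivial circle bundle are annuli and tori) are only available \emph{after} you know $S\cap M_0$ is horizontal or vertical, which returns you to the unproved boundary-compression step. Finally, note that the theorem as stated concerns arbitrary Seifert fibered manifolds, while your argument is written only for the three-exceptional-fiber, base-$S^2$ decomposition used in this paper; the general case needs the same argument over an arbitrary orbit surface with arbitrarily many exceptional fibers.
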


\begin{Theorem}\cite[Theorem 3.1]{Fr}
If the Seifert fibered manifold $M$ has at least 4 singular fibers or the orientable base surface has positive genus, then every pseudo-vertical surface in $M$ is geometrically incompressible.
\end{Theorem}

\begin{Remark}
When the base surface is a sphere and the Seifert fibered manifold has 3 singular fibers, the pseudo-vertical surfaces are not always geometrically incompressible. In \cite[Section 3]{Fr} Frohman gave the example of $S^2((2,-1),(3,1),(6,1))$.
\end{Remark}

\section{The conditions for the existence of pseudo-horizontal surfaces in orientable Seifert manifolds}

We still use the notation at the beginning of Section 2 and fix the notation for pseudo-horizontal surfaces in orientable Seifert manifolds as follows.
\begin{itemize}
  \item $Z$: a pseudo-horizontal surface in $M$.
  \item $Z_0$: $Z_0 = Z \bigcap M_0$. There is a covering map from $Z_0$ to $\Sigma_g^n$.
  \item $c_i (i = 1,\cdots,n)$: one of the several parallel simple closed curves of $\partial Z_0 \bigcap T_i$.
  \item $\lambda_i,\mu_i (i = 1,\cdots,n)$: integers satisfying $[c_i] = \left(\begin{matrix} \lambda_i & \mu_i  \end{matrix}\right)\left( \begin{matrix} [h_i] \\ [v_i] \end{matrix}\right)$. Since $(\lambda_i,\mu_i)$ and $(-\lambda_i,-\mu_i)$ correspond to the same curve, we can always assume $\lambda_i>0$. But $\mu_i$ is allowed to be $<0$. We also call $Z$ ``\textbf{the pseudo-horizontal surface determined by $(\lambda_1,\mu_1),\cdots,(\lambda_n,\mu_n)$}''.
  \item $\lambda$: the least common multiple of $\lambda_1,\dots,\lambda_n$. It equals to the covering degree of $Z_0 \to \Sigma_g^n$. The number of the components of $\partial Z_0 \cap T_i$ is $\lambda/\lambda_i$.
\end{itemize}

\begin{Proposition}\label{prop-parameters-identity}
We follow the notations at the beginnings of Section 2 and Section 3.
We have $\mu_1/\lambda_1 + \cdots + \mu_n/\lambda_n = 0$.
\end{Proposition}

\begin{proof}
The intersection $Z_0=Z \cap M_0$ is a horizontal surface in $M_0$. The boundary slopes of the components of $\partial Z_0$ on $\partial M_0$ are $(\lambda_i,\mu_i)\;(i=1,\cdots,n)$. If we attach solid tori to $M_0$ such that the meridians are glued to $(\lambda_i,\mu_i)$-curves, then we get a horizontal surface without boundary in the Seifert fibered manifold $(\Sigma_g,(\lambda_1,\mu_1),\cdots,(\lambda_n,\mu_n))$. The identity is given by the properties of horizontal surfaces, cf \cite{Wa}, \cite[Proposition 2.2]{Ha}, or \cite[Proposition 10.4.8]{Ma}.
\end{proof}

%\begin{Remark}
%When $\lambda_1 \leq \lambda_2 \leq \lambda_3$, the largest $\lambda_3$ does not have to be their least common multiple. For example, $\frac{7}{66}+\frac{1}{78}+\frac{-17}{143}=0$.
%\end{Remark}

\begin{Proposition}\label{prop-lcm-restriction}
We follow the notations at the beginnings of Section 2 and Section 3.
For each $i$, one of the following two cases happens.
\begin{enumerate}
  \item $(\lambda_i,\mu_i) = (\alpha_i,\beta_i)$.
  \item $\lambda_i$ equals $\lambda$ (recall $\lambda$ is the least common multiple of $\lambda_1,\cdots,\lambda_n$).
\end{enumerate}
Moreover, there exists some $i$ such that $(\lambda_i,\mu_i)\neq(\alpha_i,\beta_i)$.
\end{Proposition}

\begin{proof}
If $\lambda/\lambda_i>1$, then $Z \bigcap R_i$ must be $\lambda/\lambda_i$ copies of geometrically incompressible surfaces inside $R_i$ and bounded by $\lambda/\lambda_i$ simple closed curves that parallel to $c_i$. If $c_i$ is not the meridian, by Lemma \ref{lem-two-surfaces-inside-solid-torus}, these surfaces must intersect and $Z$ is not embedded. If $(\lambda_i,\mu_i)=(\alpha_i,\beta_i)$ for each $i$, then we will get an orientable horizontal surface, not a non-orientable pseudo-horizontal surface.
\end{proof}

%\begin{Remark}
%For some $i$, if $\lambda_i < \max\{\lambda_1,\lambda_2,\lambda_3\}$, then $\lambda_i = \alpha_i$ and $\mu_i = \beta_i$.
%\end{Remark}

\begin{Proposition}\label{prop-parity}
We follow the notations at the beginnings of Section 2 and Section 3. For each $i$, we have $2 \mid (\lambda_i-\alpha_i)$ and $2 \mid (\mu_i - \beta_i)$.
\end{Proposition}

\begin{proof}
If $\lambda/\lambda_i>1$, then $(\lambda_i,\mu_i) = (\alpha_i,\beta_i)$, the statement of the proposition holds.
If $\lambda/\lambda_i=1$, then the homology class of $c_i$ on $\partial R_i$ is $(\lambda_i\delta_i-\mu_i\gamma_i)[m_i] + (-\lambda_i\beta_i+\mu_i\alpha_i)[l_i]$. It can bound a geometrically incompressible surface if and only if $-\lambda_i\beta_i+\mu_i\alpha_i$ is even. Since there is at most one of $\alpha_i,\beta_i$ is even, and at most one of $\lambda_i,\mu_i$ is even, we have one of the following cases happens:
\begin{enumerate}
  \item $\lambda_i$ odd, $\beta_i$ even, $\mu_i$ even, $\alpha_i$ odd.
  \item $\lambda_i$ even, $\beta_i$ odd, $\mu_i$ odd, $\alpha_i$ even.
  \item $\lambda_i$ odd, $\beta_i$ odd, $\mu_i$ odd, $\alpha_i$ odd.
\end{enumerate}
In each case, the statement of the proposition holds.
\end{proof}

\begin{Theorem}\label{thm-existence-for-pseudo-horizontal-surface}
There exists a pseudo-horizontal surface determined by $(\lambda_1,\mu_1)$, $\cdots$, $(\lambda_n,\mu_n)$ in a Seifert manifold $M = (\Sigma_g,(\alpha_1,\beta_1),\cdots,(\alpha_n,\beta_n))$ if and only if the results from Proposition \ref{prop-parameters-identity} to Proposition \ref{prop-parity} hold.
\end{Theorem}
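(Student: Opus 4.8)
Necessity of the four conditions is exactly the content of Propositions~\ref{prop-parameters-identity} through~\ref{prop-parity}, so the only thing to do is sufficiency. Assuming all four conclusions hold (and keeping the standing convention of \S3 that each $(\lambda_i,\mu_i)$ is a coprime pair, since $c_i$ is a simple closed curve), the plan is to build a pseudo-horizontal surface $Z$ with the prescribed data in two stages: first a connected horizontal surface $Z_0\subset M_0$ whose boundary on each $T_i$ consists of $\lambda/\lambda_i$ parallel copies of a curve $c_i$ in the class $\lambda_i[h_i]+\mu_i[v_i]$, and then a filling of each solid torus $R_i$ by either a family of meridian disks or a single one-sided surface, the two pieces being glued along $T_i=\partial R_i$.

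For the first stage, recall that a horizontal surface in the product $M_0=B_0\times S^1$ is, up to isotopy, determined by a finite covering $Z_0\to B_0$ together with the way the boundary circles of $B_0$ are covered, and that the obstruction to prescribing the boundary slopes $\lambda_i[h_i]+\mu_i[v_i]$ is the vanishing of the Euler number, which over a product base is precisely the identity $\mu_1/\lambda_1+\mu_2/\lambda_2+\mu_3/\lambda_3=0$ of Proposition~\ref{prop-parameters-identity}. So I would invoke Jaco's staircase construction (\cite{Ja}; compare \cite{Wa} and \cite[Proposition 1.11]{Ha}) to produce such a $Z_0$, and then arrange its monodromy to act transitively on the fibre, so that $Z_0$ is connected. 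Here one uses that the covering degree is $\lambda=\mathrm{lcm}(\lambda_1,\lambda_2,\lambda_3)$, that $Z_0$ has $\sum_i \lambda/\lambda_i$ boundary circles, and that $\chi(Z_0)=-\lambda$; for a connected orientable $Z_0$ to exist these numbers must have compatible parity, i.e.\ $\sum_i\lambda/\lambda_i\equiv\lambda\pmod 2$, and this is forced by the coprimality of each $(\lambda_i,\mu_i)$, Proposition~\ref{prop-parameters-identity}, and the parity alternatives for the $(\lambda_i,\mu_i)$.

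For the second stage, fix $i$. If $(\lambda_i,\mu_i)=(\alpha_i,\beta_i)$, then the image of $c_i$ on $\partial R_i$ under $f_i^{-1}$ has $[l_i]$-coefficient $\mu_i\alpha_i-\lambda_i\beta_i=0$, so each of the $\lambda/\lambda_i$ copies of $c_i$ bounds a meridian disk in $R_i$, and I glue in $\lambda/\lambda_i$ parallel meridian disks. Otherwise Proposition~\ref{prop-lcm-restriction} forces $\lambda_i=\lambda$, so $c_i$ is a single curve on $T_i$; its $[l_i]$-coefficient on $\partial R_i$ equals $\mu_i\alpha_i-\lambda_i\beta_i$, which is nonzero (else $(\lambda_i,\mu_i)=(\alpha_i,\beta_i)$) and even by Proposition~\ref{prop-parity}, while $c_i$ is primitive on $T_i$, so the Bredon--Wood--Rubinstein existence statement recalled above supplies a unique geometric incompressible one-sided surface in $R_i$ with boundary $c_i$, which I glue in. The result $Z=Z_0\cup\bigcup_i(Z\cap R_i)$ is then a closed embedded surface; it is connected because $Z_0$ is and each component of each $Z\cap R_i$ meets $Z_0$ in a single boundary circle; and by construction $Z\cap M_0$ is horizontal while each $Z\cap R_i$ is a disk family or a one-sided incompressible surface, so $Z$ is pseudo-horizontal with the prescribed parameters once we know it is non-orientable. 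For that, note that if every $Z\cap R_i$ were a disk family we would have $\mu_i/\lambda_i=\beta_i/\alpha_i$ for all $i$, hence $\sum_i\beta_i/\alpha_i=\sum_i\mu_i/\lambda_i=0$, contradicting smallness of $M$; so some $Z\cap R_i$ is a one-sided surface, it contains an orientation-reversing loop in its interior, and the same loop is orientation-reversing in $Z$.

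I expect the genuine difficulty to lie in the first stage: realizing the boundary slopes by a \emph{connected} horizontal surface. The Euler-number and slope bookkeeping is classical, but showing that one can extract a transitive monodromy with the required cycle types $(\lambda_i)^{\lambda/\lambda_i}$ over $B_0$ hinges on the parity relation $\sum_i\lambda/\lambda_i\equiv\lambda\pmod 2$, and verifying that this parity is automatic under the hypotheses is the step that really uses Proposition~\ref{prop-parameters-identity} together with the parity alternatives; by contrast, the solid-torus fillings are a routine application of the Bredon--Wood--Rubinstein theorem and Proposition~\ref{prop-lcm-restriction}, and the assembly and non-orientability checks are formal.
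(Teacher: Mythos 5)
Your proposal is correct and follows essentially the same route as the paper's (much terser) proof: necessity is delegated to Propositions~\ref{prop-parameters-identity}--\ref{prop-parity}, and sufficiency is obtained by first building a horizontal surface in $M_0$ with the prescribed boundary slopes and then capping off inside each $R_i$ with meridian disks or the Bredon--Wood--Rubinstein one-sided surface. Your additional checks (the parity bookkeeping $\sum_i\lambda/\lambda_i\equiv\lambda\pmod 2$ for the covering $Z_0\to B_0$, and the use of $\sum_i\beta_i/\alpha_i\neq 0$ to guarantee non-orientability) are accurate elaborations of details the paper leaves implicit.
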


\begin{proof}
The ``only if'' part is the above propositions. For the ``if'' part, since $(\alpha_i,\beta_i)$, $(\lambda_i,\mu_i)\;(i=1,\cdots,n)$ satisfy these conditions, we can first construct a horizontal surface in $M_0$ and then extend it into each $R_i$ to construct the pseudo-horizontal surface.
\end{proof}

\begin{Theorem}\label{thm-genus-of-pseudo-horizontal-surface}
We follow the notations at the beginnings of Section 2 and Section 3.
The non-orientable genus of $Z$ is
$$
2 + \lambda\left(n-2+2g - \sum_{i=1}^n \frac{1}{\lambda_i}\right) + \sum_{i=1}^n N(-\lambda_i \beta_i+\mu_i \alpha_i,\lambda_i\delta_i-\mu_i\gamma_i),
$$
where $N(\cdot,\cdot)$ is as in Proposition \ref{prop-value-of-N}.
\end{Theorem}

\begin{proof}
Suppose $Z_0 = Z \cap M_0$. If we cap off all the boundaries of $Z_0$ with disks, then the Euler characteristic of the resulting closed orientable surface can be computed by the Riemann-Hurwitz formula as
$$
\lambda \left[(2-2g) - \sum_{i=1}^n (1-\frac{1}{\lambda_i})\right] = \lambda\left(\sum_{i=1}^n\frac{1}{\lambda_i}-n-2g+2\right).
$$
Its genus is two minus the Euler characteristic. Then we replace the disks by the embedded non-orientable surfaces in $R_i$'s to obtain the result.
\end{proof}

\section{Representing the elements in the $\mathbb{Z}_2$-homology groups by geometric closed curves and surfaces}

We follow the notations at the beginnings of Section 2 and Section 3.
$v_1\cdots,v_n$ are in the same homology class. We denote their homology class by $[v]$.

\begin{Lemma}\label{lem-fundamental-group-presentation}
Suppose $M = (\Sigma_g,(\alpha_1,\beta_1),\cdots,(\alpha_n,\beta_n))$. Then we have
$$
\begin{array}{rl} \pi_1(M) = \langle & a_1,b_1,\cdots,a_g,b_g,h_1,\cdots,h_n,v  \mid  \\
& \quad a_iv=va_i,b_iv=vb_i\;(i=1,\cdots,g),\\
& \quad h_jv=vh_j,h_j^{\alpha_j}v^{\beta_j}=1\;(i=1,\cdots,n),\\
& \quad h_1 \cdots h_n[a_1,b_1]\cdots[a_g,b_g]=1 \quad \rangle. \end{array}
$$
\end{Lemma}
\begin{proof}
See \cite[Section 5.3]{Or}.
\end{proof}

\begin{Lemma}\label{lem-homology-group-presentation}
Suppose $M = (\Sigma_g,(\alpha_1,\beta_1),\cdots,(\alpha_n,\beta_n))$. Then we have
$$\begin{array}{rl} H_1(M;\mathbb{Z}_2) = \langle & [a_1],[b_1],\cdots,[a_g],[b_g],[h_1],\cdots,[h_n],[v]  \mid  \\
& \quad \alpha_i[h_i] + \beta_i[v] = 0\;(i=1,\cdots,n),\\
& \quad [h_1] + \cdots + [h_n] = 0 \quad \rangle. \end{array}$$
\end{Lemma}
\begin{proof}
Take the abelization of the fundamental group.
\end{proof}

\begin{Lemma}\label{lem-homology-group-generator}
Suppose $M = (\Sigma_g,(\alpha_1,\beta_1),\cdots,(\alpha_n,\beta_n))$.
\begin{enumerate}
  \item[(1)] When each of $\alpha_1,\cdots,\alpha_n$ is odd,
    \begin{itemize}
      \item[(1.1)] if $\beta_1 + \cdots + \beta_n$ is odd, then $H_1(M;\mathbb{Z}_2)=\mathbb{Z}_2^{2g}$, which is generated by $\{[a_i],[b_i] \mid i=1,\cdots,g \}$.
      \item[(1.2)] if $\beta_1 + \cdots + \beta_n$ is even, then we can use fiber moves to make all $\beta_i$'s even, and $H_1(M;\mathbb{Z}_2) = \mathbb{Z}_2^{2g} \oplus \mathbb{Z}_2$, which is generated by $\{[v], [a_i],[b_i] \mid i=1,\cdots,g \}$.
    \end{itemize}
  \item[(2)] When there are $k>0$ even numbers among $\alpha_1,\cdots,\alpha_n$, we have $H_1(M;\mathbb{Z}_2) = \mathbb{Z}_2^{2g} \oplus \mathbb{Z}_2^{k-1}$. The first direct sum part $\mathbb{Z}_2^{2g}$ is generated by $\{[a_i],[b_i] \mid i=1,\cdots,g\}$. The second direct sum part is generated by $\{[h_i] \mid \alpha_i \text{ mod } 2 = 0\}$ and $\sum_{\alpha_i \text{ mod } 2 = 0} [h_i] = 0$.
\end{enumerate}
\end{Lemma}

\begin{proof}
Direct computation.
\end{proof}

By duality, $H_2(M;\mathbb{Z}_2) \cong H_1(M;\mathbb{Z}_2)$.

\begin{Lemma}\label{lem-homology-class-vertical}
We follow the notations at the beginnings of Section 2, the notations of Definition \ref{def-pseudo}, and the notations of Lemma \ref{lem-homology-group-presentation}. If $\alpha_i,\alpha_j$ are even, then we can compute the $\mathbb{Z}_2$-intersection number of the pseudo-vertical surfaces $V_{i,j}$ and the basis of $H_1(M;\mathbb{Z}_2)$ as follows:
$[V_{i,j}]\cdot[a_k]=[V_{i,j}]\cdot[b_k]=0\;(k=1,\cdots,g)$, $[V_{i,j}]\cdot[h_i]=[V_{i,j}]\cdot[h_j]=1$, for other $[h_k]$ we have $[V_{i,j}]\cdot[h_k]=0$.
\end{Lemma}
\begin{proof}
Direct geometric construction. For a picture, see Figure 1.
\end{proof}

\begin{Lemma}\label{lem-represent-homology-class}
Suppose $M = (\Sigma_g,(\alpha_1,\beta_1),\cdots,(\alpha_n,\beta_n))$. Each of the non-zero elements in $H_2(M;\mathbb{Z}_2)$ can be represented by an embedded surface (not necessary connected). Every connected component of the representing surface is a vertical, horizontal, pseudo-vertical, or pseudo-horizontal surface.
\end{Lemma}

\begin{proof}
We prove the lemma by two steps.

Step 1. We decompose $H_2(M;\mathbb{Z}_2)$ into a direct sum of two parts. The first direct sum part is $\mathbb{Z}_2^{2g}$. We will prove every non-trivial element in the first direct sum part can be represented by a vertical torus. The second part is $\{0\}$, $\mathbb{Z}_2$, or $\mathbb{Z}_2^{k-1}$ according to the cases in Lemma \ref{lem-homology-group-generator}. We will prove every non-trivial element in the second direct sum part can be represented by (1) a horizontal surface, (2) a pseudo-horizontal surface, or (3) an embedded surface (not necessary connected) whose connected components are pseudo-vertical surfaces.

Step 2. Now we have $H_2(M;\mathbb{Z}_2)\cong \mathbb{Z}_2^{2g} \oplus H$. Given an element in $H_2(M;\mathbb{Z}_2)$, project it to $\mathbb{Z}_2^{2g}$ and $H$ and construct the representing surfaces of the two projections. If these surfaces do not intersect, then take their union. If these surfaces intersect, then take their union and do surgeries on intersecting circles. The result of the surgeries is a horizontal surface or pseudo-horizontal surface.

Proof of Step 1.

By \cite[Section 6.2]{FM}, every non-zero element in $H_1(\Sigma_g;\mathbb{Z}_2) \cong \mathbb{Z}_2^{2g}$ can be represented by a simple closed curve $c$ on $\Sigma_g$. The product of the curve $c$ and $S^1$ is a vertical torus and represents a non-zero element in the first direct sum part $\mathbb{Z}_2^{2g}$ of $H_2(M;\mathbb{Z}_2)$. We have the $\mathbb{Z}_2$-intersection numbers $[c \times S^1] \cdot [v] = 0$, $[c \times S^1] \cdot [h_i] = 0$, $i = 1,\cdots,n$. About the second direct sum part of $H_2(M;\mathbb{Z}_2)$, we discuss following the cases of Lemma \ref{lem-homology-group-generator}.

Case (1.1). Each of $\alpha_1,\cdots,\alpha_n$ is odd and $\sum \beta_i$ is odd. $H_2(M;\mathbb{Z}_2) = \mathbb{Z}_2^{2g}$. There is no second direct sum part.

Case (1.2). Each of $\alpha_1,\cdots,\alpha_n$ is odd and $\sum \beta_i$ is even. $H_2(M;\mathbb{Z}_2) = \mathbb{Z}_2^{2g} \oplus \mathbb{Z}_2$. After fiber moves, we can assume each $\beta_i$ is even. For $1 \leq i \leq n-1$ we take $(\lambda_i,\mu_i)=(\alpha_i,\beta_i)$. Then we determine $(\lambda_n,\mu_n)$ from $-\mu_n/\lambda_n = \mu_1/\lambda_1 + \cdots + \mu_{n-1}/\lambda_{n-1}$. If $(\lambda_n,\mu_n)=(\alpha_n,\beta_n)$, then we can construct a horizontal surface. If $(\lambda_n,\mu_n)\neq(\alpha_n,\beta_n)$, we still have $2 \mid (\lambda_n-\alpha_n)$ and $2 \mid (\mu_n - \beta_n)$. By Theorem \ref{thm-existence-for-pseudo-horizontal-surface} there exists a pseudo-horizontal surface $Z$ determined by $(\lambda_1,\mu_1),\cdots,(\lambda_n,\mu_n)$. We can easily get the $\mathbb{Z}_2$-intersection number of $[Z]$ and the elements in $H_1(M;\mathbb{Z}_2)$ as follows: $[Z] \cdot [v]=1, [Z]\cdot [a_i]=0, [Z]\cdot [b_i]=0\;(i=1,\cdots,g)$. Hence $[Z]$ does not lie in the first direct sum part $\mathbb{Z}_2^{2g}$.

Case (2). Suppose there are $k>0$ even numbers among $\alpha_1,\cdots,\alpha_n$. For simplicity, assume $\alpha_1,\cdots,\alpha_k$ are even and $\alpha_{k+1},\cdots, \alpha_{n}$ are odd. For distinct (unordered) $2m$ numbers $\{i_1,j_1,\cdots,i_m,j_m\} \subset \{1,\cdots,k\}$, we group them in pairs, construct pseudo-vertical surfaces $V_{i_1,j_1},\cdots,V_{i_m,j_m}$ (the picture is like Figure 1), and then take the union surface, denoted by $V_{i_1,j_1,\cdots,i_m,j_m}$. The surfaces obtained by different ways of pairing will represent the same $\mathbb{Z}_2$-homology class.
Let $m$ vary from $1$ to $\lfloor \frac k2 \rfloor$. We can build
$C_k^2 + C_k^4 + \cdots + C_k^{2 \lfloor \frac k2 \rfloor} =  2^{k-1}-1$ such $V_{i_1,j_1,\cdots,i_m,j_m}$'s. By computing the $\mathbb{Z}_2$-intersection numbers we know these $2^{k-1}-1$ surfaces represent distinct $\mathbb{Z}_2$-homology classes in $H_2(M;\mathbb{Z}_2)$ and do not lie in the first part $\mathbb{Z}_2^{2g}$.
Since the second direct sum part $\mathbb{Z}_2^{k-1}$ also has $2^{k-1}-1$ non-zero elements, we have these $V_{i_1,j_1,\cdots,i_m,j_m}$'s represent the non-zero elements in the second direct sum part $\mathbb{Z}_2^{k-1}$.

Proof of Step 2.

Given $f \in H_2(M;\mathbb{Z}_2) = \mathbb{Z}_{2g}\oplus H$, we can write it as $f = f_1 + f_2$, where $f_1 \in \mathbb{Z}_{2g}$ and $f_2 \in H$.
$f_1$ can be represented by a vertical torus. If $f_2$ is represented by a union of pseudo-vertical surfaces, then it does not intersect the vertical torus. We can take the union to represent $f$. If $f_2$ is represented by a horizontal surface or a pseudo-horizontal surface, then it intersects the vertical torus in a simple closed curve. We take the union of two surfaces and do surgeries. The result of the surgeries is still a horizontal surface or a pseudo-horizontal surface, whose Euler characteristic remains the same.
\end{proof}

\begin{Remark}\label{rem-norm-upper-bound}
In Step 1 of Lemma \ref{lem-represent-homology-class}, the Euler characteristics of the elements in $\mathbb{Z}_2^{2g}$ are zero. In Case (1.2), the absolute value of the Euler characteristic of the horizontal or the pseudo-horizontal surfaces can be computed by Theorem \ref{thm-genus-of-pseudo-horizontal-surface}:
$$
\lambda\left(n-2+2g - \sum_{i=1}^{n-1} \frac 1{\alpha_i} - \frac 1 {\lambda_n}\right) + N(-\lambda_n \beta_n+\mu_n \alpha_n,\lambda_n\delta_n-\mu_n\gamma_n),
$$
where $\mu_n/\lambda_n = -\beta_1/\alpha_1 - \cdots - \beta_{n-1}/\alpha_{n-1}$, $\lambda_n >0$, $\alpha_n \delta_n - \beta_n \gamma_n = 1$, $\lambda$ is the least common multiple of $\alpha_1,\cdots,\alpha_{n-1},\lambda_n$. In Case (2), the non-orientable genus of $V_{i_1,j_1,\cdots,i_m,j_m}$ is no more than $\sum_{2 \mid i} N(\alpha_i,\beta_i)$. Hence we get a uniform upper bound for the $\mathbb{Z}_2$-Thurston norms for the elements in $H_2(M;\mathbb{Z}_2)$.
\end{Remark}

\begin{Remark}\label{rem-homology-ring}
As a by-product, using the intersections of the concrete closed curves and surfaces that represent the elements $H_1(M;\mathbb{Z}_2)$ and $H_2(M;\mathbb{Z}_2)$, we can easily calculate the $\mathbb{Z}_2$-cohomology ring of Seifert fibered manifolds. We can compare this to the complicated method to calculate the cup product of the cohomology ring in \cite{BHZZ}.
\end{Remark}

When there are $k>1$ even numbers among $\alpha_1,\cdots,\alpha_n$, the Seifert fibered manifold $M=(\Sigma_g,(\alpha_1,\beta_1),\cdots,(\alpha_n,\beta_n))$ can have many pseudo-horizontal surfaces representing the elements in the second direct sum part $\mathbb{Z}_2^{k-1}$ of $H_2(M;\mathbb{Z}_2)=\mathbb{Z}_2^{2g}\oplus \mathbb{Z}_2^{k-1}$. We can detect the homology classes of these pseudo-horizontal surfaces by the following lemma and the structure of the $\mathbb{Z}_2$-cohomology ring.

\begin{Lemma}\label{lem-homology-class-horizontal}
We follow the notations at the beginnings of Section 2 and Section 3 and the notations of Lemma \ref{lem-homology-group-presentation}.
The $\mathbb{Z}_2$-intersection number of $[Z]$ and the basis of $H_1(M;\mathbb{Z}_2)$ is as follows.
\begin{enumerate}
  \item If each of $\alpha_1,\cdots,\alpha_n$ is odd, then $[Z]\cdot[a_i]=0$, $[Z] \cdot [b_i]=0\;(i = 1,\cdots,g)$ and $[Z]\cdot[v]=1$.
  \item If there are $k>1$ even numbers among $\alpha_1,\cdots,\alpha_n$, then $[Z]\cdot[a_i]=0, [Z] \cdot [b_i]=0\;(i = 1,\cdots,g)$ and $[Z]\cdot[h_j]=\mu_j \cdot \lambda / \lambda_j\;(j = 1,\cdots,n)$.
\end{enumerate}
\end{Lemma}

\begin{proof}
Direct geometric construction. For a picture, see Figure 3 in Example \ref{first-example}.
\end{proof}

\section{Examples}\label{section-eg}

In this section, we first give three families of small Seifert fibered manifolds in which there exist pseudo-vertical surfaces that are not $\mathbb{Z}_2$-taut. Then in the next example, a pseudo-vertical surface is isotopic to a pseudo-horizontal surface. In the final example, the three pseudo-vertical surfaces are $\mathbb{Z}_2$-taut and all the pseudo-horizontal surfaces are not $\mathbb{Z}_2$-taut. By convention, we denote the 2-sphere $\Sigma_0$ by $S^2$. Throughout this section, we follow the notations at the beginnings of Section 2 and Section 3 and the notations of Definition \ref{def-pseudo}.

\begin{Example}\label{first-example}
$M = (S^2,(2,-1),(2m+1,m),(2n,1))$, where $n > 2m+1$.
\end{Example}
In this example,
$$
\left(\begin{matrix}\alpha_1 & \beta_1\\ \gamma_1 & \delta_1\end{matrix}\right) = \left(\begin{matrix}2 & -1\\-1 & 1\end{matrix}\right),
\left(\begin{matrix}\alpha_2 & \beta_2\\ \gamma_2 & \delta_2\end{matrix}\right) = \left(\begin{matrix}2m+1 & m\\2 & 1\end{matrix}\right),
\left(\begin{matrix}\alpha_3 & \beta_3 \\ \gamma_3 & \delta_3\end{matrix}\right) = \left(\begin{matrix}2n & 1\\ 2n-1 & 1\end{matrix}\right).
$$

Take $(\lambda_1,\mu_1) = (2,-1)$,
$(\lambda_2,\mu_2) = (2m+1,m)$,
and $(\lambda_3,\mu_3) = (4m+2,1)$.
The least common multiple $\lambda = 4m+2$.
By Theorem \ref{thm-existence-for-pseudo-horizontal-surface} $(\lambda_1,\mu_1),(\lambda_2,\mu_2),(\lambda_3,\mu_3)$ determine a pseudo-horizontal surface $Z$.
By Theorem \ref{thm-genus-of-pseudo-horizontal-surface}, the non-orientable genus of $Z$ is
\begin{eqnarray*}
& & 2 + (4m+2) \left( 1 - \frac{1}{2} - \frac{1}{2m+1} - \frac{1}{4m+2} \right) \\
& & \quad + N(-(4m+2) + 2n,(4m+2) - (2n-1)) \\
&=& 2m + N(2n-(4m+2),1) = 2m + n - (2m+1) = n - 1.
\end{eqnarray*}
By Lemma \ref{lem-homology-class-vertical} and Lemma \ref{lem-homology-class-horizontal}, $Z$ is in the same $\mathbb{Z}_2$-homology class as the pseudo-vertical surface $[V_{1,3}]$. By Lemma \ref{lem-genus-pseudo-vertical}, the non-orientable genus of $V_{1,3}$ is $n+1$.

Figure 2 shows the identification from each $\partial R_i$ to $\partial M_0$ for the small Seifert manifold $M = (S^2,(2,-1),(3,1),(8,1))$. The upper side should be identified to the bottom side. Figure 2-(left) shows the result of deleting a tubular neighborhood of a regular fiber from $M_0$, which is denoted by $M_0'$. Figure 2-(right) shows three solid tori. We draw the corresponding curves for the identification.
\begin{figure}[htbp]
\begin{center}
\includegraphics[height=6cm]{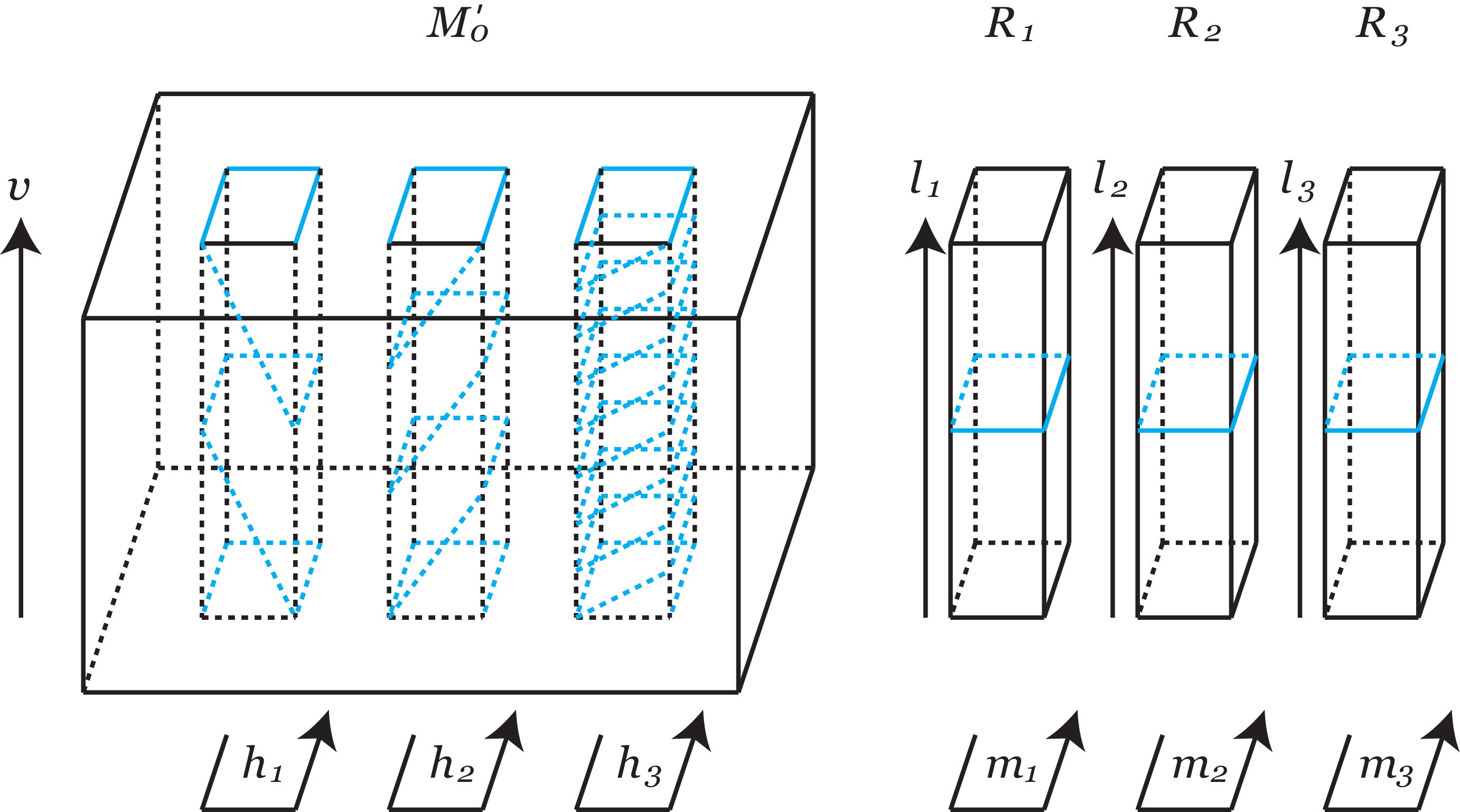}\\
Figure 2.
\end{center}
\end{figure}

Figure 3 shows the pseudo-horizontal surface $Z$ determined by $(\lambda_1,\mu_1)=(2,-1)$, $(\lambda_2,\mu_2)=(3,1)$, $(\lambda_3,\mu_3)=(6,1)$ inside $(S^2,(2,-1),(3,1),(8,1))$.
Figure 3-(left) is the front part of $M_0'$. Figure 3-(middle) is the back part of $M_0'$. Figure 3-(right) shows five disks and one M\"obius band in the solid tori $R_1,R_2,R_3$.
We can see the staircase surface in $M_0'$. We can check $[Z] = [V_{1,3}]$, where $V_{1,3}$ is the pseudo-vertical surface connecting $R_1$ and $R_3$.
The non-orientable genus of $Z$ is 3, while the non-orientable genus of $V_{1,3}$ is $N(2,1)+N(8,1)=5$.

\begin{figure}[htbp]
\begin{center}
\includegraphics[height=4.3cm]{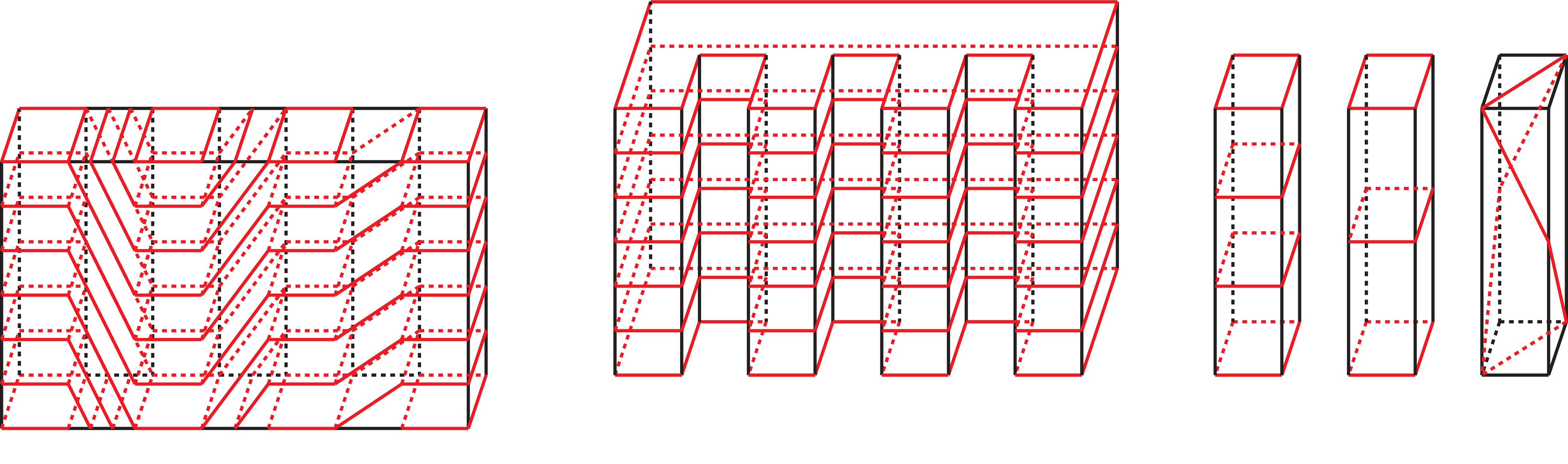}\\
Figure 3.
\end{center}
\end{figure}

\begin{Example}
$M = (S^2,(3,-1),(4,1),(2n,1))$, where $n > 6$.
\end{Example}

In this example,
$$
\left(\begin{matrix}\alpha_1 & \beta_1\\ \gamma_1 & \delta_1\end{matrix}\right) = \left(\begin{matrix}3 & -1\\-2 & 1\end{matrix}\right),
\left(\begin{matrix}\alpha_2 & \beta_2\\ \gamma_2 & \delta_2\end{matrix}\right) = \left(\begin{matrix}4 & 1\\3 & 1\end{matrix}\right),
\left(\begin{matrix}\alpha_3 & \beta_3 \\ \gamma_3 & \delta_3\end{matrix}\right) = \left(\begin{matrix}2n & 1\\ 2n-1 & 1\end{matrix}\right).
$$
Let $(\lambda_1,\mu_1) = (3,-1)$,
$(\lambda_2,\mu_2) = (4,1)$,
and $(\lambda_3,\mu_3) =  (12,1)$.
Then $\lambda = 12$ and $(\lambda_1,\mu_1),(\lambda_2,\mu_2),(\lambda_3,\mu_3)$ determine a pseudo-horizontal surface $Z$ whose non-orientable genus is
$$
2 + 12 \left( 1 - \frac{1}{3} - \frac{1}{4} - \frac{1}{12}\right) + N(-12 + 2n,12 - 2n + 1) =  6 + N(2n-12,1) = n.
$$
We can check $[Z] = [V_{2,3}]$, where $V_{2,3}$ is the pseudo-vertical surface connecting $R_2$ and $R_3$.
The non-orientable genus of $V_{2,3}$ is $n+2$.

\begin{Example}
$M = (S^2,(m,-1),(2n_2,1),(2n_3,1))$ where $m \geq 2$, $n_2 \geq m$, $n_3 \geq m$, and $n_2 + n_3 > 2m$.
\end{Example}

In this example,
$$
\left(\begin{matrix}\alpha_1 & \beta_1\\ \gamma_1 & \delta_1\end{matrix}\right) = \left(\begin{matrix}m & -1\\1-m & 1\end{matrix}\right),
\left(\begin{matrix}\alpha_2 & \beta_2\\ \gamma_2 & \delta_2\end{matrix}\right) = \left(\begin{matrix}2n_2 & 1\\2n_2-1 & 1\end{matrix}\right),
\left(\begin{matrix}\alpha_3 & \beta_3 \\ \gamma_3 & \delta_3\end{matrix}\right) = \left(\begin{matrix}2n_3 & 1\\ 2n_3-1 & 1\end{matrix}\right).
$$
Let $(\lambda_1,\mu_1) = (m,-1)$,
$(\lambda_2,\mu_2) = (2m,1)$,
$(\lambda_3,\mu_3) = (2m,1)$.
Then $\lambda = 2m$ and $(\lambda_1,\mu_1),(\lambda_2,\mu_2),(\lambda_3,\mu_3)$ determine a pseudo-horizontal surface $Z$ whose non-orientable genus is
$$
2 + 2m\left(1 - \frac{1}{m} - \frac{1}{2m} - \frac{1}{2m}\right) + N(2n_2 - 2m,1) + N(2n_3-2m,1) = n_2 + n_3 - 2.
$$
Such a pseudo-horizontal surface intersects $h_2$ and $h_3$ once respectively. Hence $[Z] = [V_{2,3}]$, where $V_{2,3}$ is the pseudo-vertical surface connecting $R_2$ and $R_3$. The non-orientable genus of $[V_{2,3}]$ is $n_2 + n_3$.

\begin{Example}\label{eg-O48}
$M = (S^2,(2,-1),(3,1),(4,1))$. This manifold is the elliptic 3-manifold $S^3/O_{48}^*$ where $O_{48}^*$ is the binary octahedral group.
\end{Example}

In this example,
$$
\left(\begin{matrix}\alpha_1 & \beta_1\\ \gamma_1 & \delta_1\end{matrix}\right) = \left(\begin{matrix}2 & -1\\-1 & 1\end{matrix}\right),
\left(\begin{matrix}\alpha_2 & \beta_2\\ \gamma_2 & \delta_2\end{matrix}\right) = \left(\begin{matrix}3 & 1\\2 & 1\end{matrix}\right),
\left(\begin{matrix}\alpha_3 & \beta_3 \\ \gamma_3 & \delta_3\end{matrix}\right) = \left(\begin{matrix}4 & 1\\ 3 & 1\end{matrix}\right).
$$
Let $(\lambda_1,\mu_1) = (2,-1)$,
$(\lambda_2,\mu_2) = (3,1)$,
$(\lambda_3,\mu_3) = (6,1)$.
They determine a pseudo-horizontal surface $Z$ whose non-orientable genus is
$$
2 + 6\left( 1 - \frac{1}{2} - \frac{1}{3} - \frac{1}{6} \right)
+N(-6 \cdot 1 + 1\cdot 4,6 \cdot 1 - 1 \cdot 3)
= 2 + N(2,1) = 3.
$$
The non-orientable genus of the pseudo-vertical surface $V_{1,3}$ is also $3$.
By \cite{RB}, the mapping class group of $M$ is trivial.
Hence $[V_{1,3}]$ is isotopic to $[Z]$.

\begin{Example}\label{eg-prism}
$M = (S^2,(2,-1),(2,1),(2n,1))$, where $n \geq 1$. These manifolds are also called ``prism manifolds'' which are elliptic 3-manifolds. Their fundamental groups are called ``binary dihedral groups'' or ``generalized quaternion groups'', denoted by $D^*_{8n}$ or $Q_{8n}$.
\end{Example}

In these examples,
$$
\left(\begin{matrix}\alpha_1 & \beta_1\\ \gamma_1 & \delta_1\end{matrix}\right) = \left(\begin{matrix}2 & -1\\-1 & 1\end{matrix}\right),
\left(\begin{matrix}\alpha_2 & \beta_2\\ \gamma_2 & \delta_2\end{matrix}\right) = \left(\begin{matrix}2 & 1\\1 & 1\end{matrix}\right),
\left(\begin{matrix}\alpha_3 & \beta_3 \\ \gamma_3 & \delta_3\end{matrix}\right) = \left(\begin{matrix}2n & 1\\ 2n-1 & 1\end{matrix}\right).
$$

There are three pseudo-vertical surfaces $V_{1,2},V_{1,3},V_{2,3}$. These surfaces represent the non-zero elements in $H_2(M;\mathbb{Z}_2) = \mathbb{Z}_2 \oplus \mathbb{Z}_2$. We can use the algorithm in Section 7 to prove they are $\mathbb{Z}_2$-taut. This result has also been shown in \cite[Section 6]{JRT} by a different method. Moreover, the algorithm in Section 7 will tell us all pseudo-horizontal surfaces in $M$ are not $\mathbb{Z}_2$-taut.

\section{Properties of the $N(2k,q)$}

The function $N(2k,q)$ plays a crucial role in the calculation of the $\mathbb{Z}_2$-Thurston norms for general orientable Seifert manifolds. This section studies more properties of $N(2k,q)$ when $k$ and $q$ are large.

For a general fractal number $\alpha/\gamma$ ($\alpha>\gamma>0$, $(\alpha,\gamma)=1$, $\alpha$ needs not to be even), write it as the continued fraction $[a_0,a_1,\cdots,a_n]$. Just as the laws in Proposition \ref{prop-value-of-N}, construct a new number sequence $(b_0,b_1,\cdots,b_n)$, i.e. $b_0=a_0$,
$$
  b_i = \left\{ \begin{array}{ll}a_i, & b_{i-1} = 0 \text{ or } \sum_{j=0}^{i-1} b_j \text{ is odd}, \\
  0, & b_{i-1} \neq 0 \text{ and } \sum_{j=0}^{i-1} b_j \text{ is even}. \end{array}\right.
$$

\begin{Lemma}\label{lem-subsequence-cases}
Let positive integers $\alpha,\gamma$ and sequence $(b_i)_{i=1}^n$ be as above. Suppose there are $k$ zero's in $(b_i)$, these zero terms are $b_{m_1},\cdots,b_{m_k}$. They separate $(b_i)$ into many subsequences, and the terms in the subsequences equal the corresponding terms in $(a_i)$. Exactly one of the following 3 cases will happen for each subsequence.
\begin{enumerate}
  \item[(1)] The subsequence has only 1 term whose number is even.
  \item[(2)] The subsequence has at least 2 terms. The numbers of the first term and last term are odd. The numbers of the middle terms are even.
  \item[(3)] The number of the first term of the subsequence is odd. The numbers of the other terms are even.
\end{enumerate}
Case (3) can only happen for the last subsequence $(b_{m_k+1},b_{m_k+2},\cdots,b_n)$.
\end{Lemma}
\begin{proof}
The first subsequence is $b_0,\cdots,b_{m_1-1}$.
The subsequences in the middle are $b_{m_i+1},b_{m_i+2},\cdots,b_{m_{i+1}-1}\;(i=1,\cdots,k-1)$.
By the construction of $(b_i)$, we can directly verify the statements of the lemma.
\end{proof}

\begin{Lemma}\label{lem-continued-fraction-pairity}
Let positive integers $\alpha,\gamma$, sequence $(b_i)_{i=1}^n$ be as above. If $\alpha$ is even, then $\sum_{i=0}^n b_i$ is even; If $\alpha$ is odd, then $\sum_{i=0}^n b_i$ is odd.
\end{Lemma}
\begin{proof}
Suppose $\alpha/\gamma = [a_0,a_1,\cdots,a_n]$. We apply the Euclidean algorithm to $(\alpha,\gamma)$.
Take $r_0 = \alpha, r_1 = \gamma$. Then we have
$$
\begin{array}{rclrcl}
r_0 &=& r_1 \cdot a_0 + r_2, &  \qquad (r_0,r_1) & \overset{a_0}\longrightarrow & \; (r_1,r_2),\\
r_1  &=& r_2 \cdot a_1 + r_3, & \qquad (r_1,r_2) & \overset{a_1}\longrightarrow & \; (r_2,r_3),\\
r_2 &=& r_3 \cdot a_2 + r_4, & \qquad (r_2,r_3) & \overset{a_2}\longrightarrow & \; (r_3,r_4),\\
&\cdots& & &\cdots& \\
r_{n-2} &=& r_{n-1} \cdot a_{n-2} + r_n, & \qquad (r_{n-2},r_{n-1}) & \overset{a_{n-2}}\longrightarrow & \;(r_{n-1},r_n),\\
r_{n-1} &=& r_n \cdot a_{n-1} + 1, & \qquad (r_{n-1},r_n) & \overset{a_{n-1}}\longrightarrow & \;(r_n,1),\\
r_{n} &=& 1 \cdot a_n + 0, & \qquad (r_{n},1) & \overset{a_n}\longrightarrow & \; (1,0).
\end{array}
$$
We find the zero terms $b_{m_1},b_{m_2},\cdots,b_{m_k}$. By summing up from $r_0 = r_1 \cdot a_0 + r_2$ to $r_{m_1-1} = r_{m_1}\cdot a_{m_1-1}+r_{m_1+1}$ and modulo 2, we get $2 \mid (r_0-r_{m_1+1})$. Since $(b_0,\cdots,b_{m_1-1})=(a_0,\cdots,a_{m_1-1})$ and $b_{m_1}=0$, when we calculate $b_{m_1+1}$, we need to begin with $r_{m_1+1} = r_{m_1+2} \cdot a_{m_1+1} + r_{m_1+3}$. By summing up from $r_{m_1+1} = r_{m_1+2} \cdot a_{m_1+1} + r_{m_1+3}$ to $r_{m_2-1} = r_{m_2}\cdot a_{m_2-1}+r_{m_2+1}$ and modulo 2, we get $2 \mid (r_{m_1+1}-r_{m_2+1})$. Similarly, we finally get $2 \mid (r_0 - r_{m_k+1})$.

Again, sum up from $r_{m_k+1} = r_{m_k+2} \cdot a_{m_k+1} + r_{m_k+3}$ to $r_{n} = 1\cdot a_{n}+0$ and modulo 2.
If the last subsequence $(a_{m_k+1},a_{m_k+2},\cdots,a_n)=(b_{m_k+1},b_{m_k+2},\cdots,b_n)$ belongs to case (3) in Lemma \ref{lem-subsequence-cases}, then $\sum_{i=0}^n b_i$ is odd and $r_{m_k+1}$ is odd, hence $\alpha$ is odd. If the last subsequence $(a_{m_k+1},a_{m_k+2},\cdots,a_n)=(b_{m_k+1},b_{m_k+2},\cdots,b_n)$ belongs to case (1) or (2) in Lemma \ref{lem-subsequence-cases}, then $\sum_{i=0}^n b_i$ is even and $r_{m_k+1}$ is even, hence $\alpha$ is even.
\end{proof}

\begin{Lemma}\label{lem-asymptotic-N}
Suppose $\alpha,\gamma,x,y$ are fixed integers, $(\alpha,\gamma)=1$, $0<\gamma<\alpha$, and $N(\cdot,\cdot)$ is the function described in Proposition \ref{prop-value-of-N}. Let $L=|\alpha|+|\gamma|+|x|+|y|$. When (i) $\mu > 8L^{3}$, (ii) $(\alpha \cdot \mu + x,\gamma \cdot \mu + y)=1$ and (iii) $\alpha \cdot \mu + x$ is even, we have the following properties for $ N(\alpha \cdot \mu + x,\gamma \cdot \mu + y)$:
\begin{itemize}
  \item[(1)] If $\alpha$ is even, then $N(\alpha \cdot \mu + x,\gamma \cdot \mu + y) \geq N(\alpha,\gamma)$. Moreover, there exists a positive integer $t < 8L^{3}$ such that $N(\alpha \cdot (\mu + t) + x,\gamma \cdot (\mu + t) + y) = N(\alpha \cdot \mu + x,\gamma \cdot \mu + y)$.
  \item[(2)] If $\alpha$ is odd, then $N(\alpha \cdot \mu + x,\gamma \cdot \mu + y) \to +\infty$ as $\mu \to +\infty$. More specifically, for any $G>0$, if $\mu > (G + 2)\cdot 8L^{3}$, then $N(\alpha\cdot \mu + x,\gamma \cdot \mu +y) > G$.
\end{itemize}
\end{Lemma}

\begin{proof}
Write down the continued fraction for $\alpha/\gamma$ and $(\alpha \cdot \mu + x)/(\gamma \cdot \mu + y)$ and compare them.
The continued fraction of $\alpha/\gamma$ is given in Lemma \ref{lem-continued-fraction-pairity}, i.e. take $r_0=\alpha, r_1 = \gamma$, then $r_i = r_{i+1} \cdot a_i + r_{i+2}$, with $r_{n+1}=1,r_{n+2}=0$. The $a_i,r_i\;(i=0,\cdots,n)$ are positive integers determined by $\alpha$ and $\gamma$. We can see $a_i < L, r_i<L, n<L, r_{i+1}<r_i$.

When $\mu> 8L^{3}$, we now show the first $n$ terms of the continued fraction of $\alpha/\gamma$ and $(\alpha \cdot \mu + x)/(\gamma \cdot \mu + y)$ are the same. Let $z_0=x,z_1=y$ and $z_{i+2} = z_i - z_{i+1}\cdot a_i\;(i=0,1,\cdots,n)$. Then formally we have
$$
\begin{array}{rcl}
r_0 \cdot \mu + z_0 &=& (r_1 \cdot \mu + z_1) \cdot a_0 + (r_2 \cdot \mu + z_2),\\
r_1 \cdot \mu + z_1 &=& (r_2 \cdot \mu + z_2) \cdot a_1 + (r_3 \cdot \mu + z_3),\\
r_2 \cdot \mu + z_2 &=& (r_3 \cdot \mu + z_3) \cdot a_2 + (r_4 \cdot \mu + z_4),\\
\cdots\\
r_{n-2} \cdot \mu + z_{n-2} &=& (r_{n-1} \cdot \mu + z_{n-1}) \cdot a_{n-2} + (r_{n} \cdot \mu + z_{n}),\\
r_{n-1} \cdot \mu + z_{n-1} &=& (r_n \cdot \mu + z_n) \cdot a_{n-1} + (1 \cdot \mu + z_{n+1}),\\
r_n \cdot \mu + z_n &=& (1 \cdot \mu + z_{n+1}) \cdot a_n +  z_{n+2}.
\end{array}
$$
We estimate $|z_i|$ as follows. By the extended Euclidean algorithm, for $2 \leq i \leq n+1$, we can compute $u_i,v_i$ satisfying $|u_i|<L, |v_i|<L$ such that $\alpha \cdot u_i + \gamma \cdot v_i = r_i$ and $x \cdot u_i + y \cdot v_i = z_i$. Hence for $2 \leq i \leq n+1$ we have $|z_i| < 2L^2$. For $z_{n+2}$, we have $|z_{n+2}| \leq |z_n| + |z_{n+1}\cdot a_n| < 4L^3$.

If both (a) $r_i \cdot \mu + z_i>0\;(i=1,\cdots,n)$ and (b) $r_i \cdot \mu + z_i > r_{i+1} \cdot \mu + z_{i+1}\;(i = 1,\cdots,n)$ hold, then the above process will give the first $n$ terms of the continued fraction for $(\alpha \cdot \mu + x)/(\gamma \cdot \mu +y)$.

Check (a): we only need $r_i \cdot \mu > |z_i|$. In fact, $|z_i| < 2L^2 < \mu < r_i \cdot \mu$.

Check (b): we only need $(r_i-r_{i+1})\cdot \mu > |z_{i+1}-z_i|$. In fact, $|z_{i+1}-z_i|< 4L^2 < \mu < (r_i-r_{i+1})\cdot \mu$.

We look at the rest steps of the Euclidean algorithm for $(\alpha \cdot \mu + x,\gamma \cdot \mu + y)$. In the $(n+1)$-th step $r_n \cdot \mu + z_n = (1 \cdot \mu + z_{n+1}) \cdot a_n +  z_{n+2}$, we do not know whether $z_{n+2}>0$ or $z_{n+2}<0$. So we need to discuss both cases.

If $z_{n+2}>0$, then we have
$$
\begin{array}{rcl}
r_{n} \cdot \mu + z_n &=& (1 \cdot \mu + z_{n+1})\cdot a_n + z_{n+2},\\
1 \cdot \mu + z_{n+1} &=& z_{n+2} \cdot \left\lfloor \frac{\mu + z_{n+1}}{z_{n+2}} \right\rfloor + s_{n+3},\\
r_{n+2} \cdot \mu + z_{n+2} &=& s_{n+3} \cdot a_{n+2} + s_{n+4},\\
&\cdots&
\end{array}
$$
where $s_{n+3},s_{n+4},a_{n+2}>0$, $\lfloor \cdot \rfloor$ is the floor function. So the continued fraction expansion for $(\alpha \cdot \mu + x)/(\gamma \cdot \mu + y)$ is
$$
(\alpha\cdot \mu + x)/(\gamma \cdot \mu + y) = \left[a_0,a_1,\cdots,a_{n-1},a_n,\left\lfloor \cfrac{\mu + z_{n+1}}{z_{n+2}} \right\rfloor,a_{n+2},\cdots\right]
$$
If $z_{n+2}<0$, in order to make the numbers positive, we need to modify a little:
$$
\begin{array}{rcl}
r_{n} \cdot \mu + z_n &=& (1 \cdot \mu + z_{n+1})\cdot (a_n-1) + [1 \cdot \mu + (z_{n+1}+z_{n+2})],\\
1 \cdot \mu + z_{n+1} &=& [1 \cdot \mu + (z_{n+1}+z_{n+2})] \cdot 1 + (-z_{n+2}),\\
1 \cdot \mu + (z_{n+1}+z_{n+2}) &=& (-z_{n+2}) \cdot \left\lfloor \frac {1\cdot \mu + (z_{n+1}+z_{n+2})}{-z_{n+2}} \right\rfloor + s_{n+4},\\
-z_{n+2} &=& s_{n+4}\cdot a_{n+3} + s_{n+5},\\
&\cdots&
\end{array}
$$
So the continued fraction expansion for $(\alpha \cdot \mu + x)/(\gamma \cdot \mu + y)$ is
$$
\left[ a_0,a_1,\cdots,a_{n-1},a_n-1,1,\left\lfloor \frac{1 \cdot \mu + z_{n+1} + z_{n+2}}{(-z_{n+2})} \right\rfloor,a_{n+3},\cdots \right]
$$

We construct the new sequences $(b_i)$ and $(b_i')$ for calculating $N(\alpha,\gamma)$ and $N(\alpha\cdot\mu+x,\gamma\cdot \mu +y)$ according to the law at the beginning of this section.

When $\alpha$ is even, $\sum_{i=0}^n b_i$ is even. If $z_{n+2}>0$, then $\sum_{i=0}^n b_i'$ is also even. It makes $b_{n+1}'=0$, the term $\lfloor \cfrac{\mu + z_{n+1}}{z_{n+2}} \rfloor$ does not contribute to $ N(\alpha \cdot \mu + x,\gamma \cdot \mu + y)$. If $z_{n+2}<0$, then $\sum_{i=0}^n b_i' = \sum_{i=0}^n b_i-1$ is odd. Now $b_{n+1}'=1$, so $\sum_{i=0}^{n+1} b_i'$ is even. It makes $b_{n+2}'=0$, the term $\lfloor \cfrac{1 \cdot \mu + z_{n+1} + z_{n+2}}{(-z_{n+2})} \rfloor$ does not contribute to $ N(\alpha \cdot \mu + x,\gamma \cdot \mu + y)$. In each case, taking $\mu' = \mu + |z_{n+2}|$, the rest part of the continued fraction expansions of $(\alpha \cdot \mu + x)/(\gamma \cdot \mu + y)$ and $(\alpha \cdot \mu' + x)/(\gamma \cdot \mu' + y)$ are the same. Let $t = |z_{n+2}|$, then we have
$$N(\alpha \cdot (\mu+t)+x,\gamma \cdot (\mu+t) + y) = N(\alpha \cdot \mu + x, \gamma\cdot \mu + y).$$

When $\alpha$ is odd, $\sum_{i=0}^n b_i$ is odd. If $z_{n+2}>0$, then $\sum_{i=0}^n b_i' = \sum_{i=0}^n b_i$ is odd. When we compute $N(\alpha \cdot \mu+x,\gamma \cdot \mu + y)$ we must add $b_{n+1}' = \lfloor \cfrac{\mu + z_{n+1}}{z_{n+2}} \rfloor$. If $z_{n+2}<0$, then $\sum_{i=0}^n b_i' = \sum_{i=0}^n b_i-1$ is even and $b_{n+1}'=0$. When we compute $N(\alpha \cdot \mu+x,\gamma \cdot \mu + y)$ we must add $b_{n+2}' = \lfloor \cfrac{1 \cdot \mu + z_{n+1} + z_{n+2}}{(-z_{n+2})} \rfloor$. Since $|z_{n+2}|<4L^{3}$ and $|z_{n+1}|<2L^2$, no matter $z_{n+2}>0$ or $z_{n+2}<0$, for arbitrarily large $G$, when $\mu > (G +2)\cdot 8L^3$, we have $N(\alpha \cdot \mu + x, \gamma \cdot \mu + y) > G$.
\end{proof}

\begin{Remark}\label{rem-asymptotic-N-negative}
Lemma \ref{lem-asymptotic-N} only deals with the $\mu > 0$ case. If $\mu < 0$, then $N(\alpha \cdot \mu + x, \gamma \cdot \mu + y) = N(\alpha \cdot (-\mu) - x, \gamma \cdot (-\mu) - y)$. Hence we can reduce the $\mu < 0$ case to the $\mu > 0$ case.
\end{Remark}

\section{$\mathbb{Z}_2$-Thurston norms for Seifert manifolds}

Suppose we have a Seifert fibered manifold  $M=(\Sigma_g,(\alpha_1,\beta_1),\cdots,(\alpha_n,\beta_n))$. For each non-zero element in $H_2(M;\mathbb{Z}_2)$, we calculate its $\mathbb{Z}_2$-Thurston norm as follows.

\textbf{Step 1.} In Remark \ref{rem-norm-upper-bound} we have described a uniform upper bound for the $\mathbb{Z}_2$-Thurston norms of the elements in $H_2(M;\mathbb{Z}_2)$. Denote this upper bound by $g_{\max}-2$.

\textbf{Step 2.} Take all pseudo-vertical surfaces and their combinations as candidates. As in Case (2) of Step 1 of the proof of Lemma \ref{lem-represent-homology-class}, there are $2^{k-1}-1$ of them. Here $k$ is the cardinality of the even numbers in $\{\alpha_1,\cdots,\alpha_n\}$.

Take the following finite cases of $(\lambda_1,\mu_1),\cdots,(\lambda_n,\mu_n)$ for the horizontal surfaces or pseudo-horizontal surfaces.
\begin{enumerate}
  \item $\lambda_1=\cdots=\lambda_n=1$ and $\mu_1=\cdots=\mu_n=0$.
  \item $(\lambda_j,\mu_j) = (\alpha_j,\beta_j)\;(1 \leq j \leq n)$.
  \item for $1 \leq i\leq n$, take $(\lambda_j,\mu_j) = (\alpha_j,\beta_j)$ ($j \neq i$) and $\mu_i/\lambda_i = -\sum_{j \neq i} \beta_j/\alpha_j$.
\end{enumerate}
Check whether they satisfy the condition in Theorem \ref{thm-existence-for-pseudo-horizontal-surface}. If they satisfy, then build a horizontal surface or pseudo-horizontal surface as a candidate surface. In the following steps we exclude the above cases.

\textbf{Step 3.} Take the pseudo-horizontal surface $Z$ determined by $(\lambda_i,\mu_i)\;(i=1,\cdots,n)$. Denote its non-orientable genus by $g^N(Z)$. We will prove if the least common multiple $\lambda$ is larger than $2 g_{\max} + 2$, then $g^N(Z) > g_{\max}$ and hence $Z$ is not $\mathbb{Z}_2$-taut. So the candidate pseudo-horizontal surfaces satisfy $\lambda_i \leq 2 g_{\max} + 2\;(i=1,\cdots,n)$.

\textbf{Step 4.} Suppose $\lambda_i \leq 2 g_{\max}+2\;(i=1,\cdots,n)$ are fixed. We will prove that the candidate $\mathbb{Z}_2$-taut pseudo-horizontal surfaces must satisfy
$$
|\mu_i| \leq (8 \cdot g_{\max}+16)\cdot [(4g_{\max}+6)\cdot K]^3
$$
for each $i$. Here $K = \max_{1 \leq i \leq n}(|\alpha_i| + |\beta_i|)$.

\textbf{Step 5.} Now we have finitely many candidate pseudo-horizontal surfaces and pseudo-vertical surfaces.
We can calculate their $\mathbb{Z}_2$-homology classes as in Section 4. So for every given $\mathbb{Z}_2$-homology class, we can find the minimal genus of the representing surfaces.

The only remaining thing is to prove Step 3 and Step 4.

\begin{proof}[Proof of Step 3]
If $(\alpha_i,\beta_i),(\lambda_i,\mu_i)\;(i=1,\cdots,n)$ satisfy the condition of Theorem \ref{thm-existence-for-pseudo-horizontal-surface}, then by Theorem \ref{thm-genus-of-pseudo-horizontal-surface}, we have
$$
g^N(Z) = 2 + \lambda\left(n-2+2g - \sum_{i=1}^n \frac 1{\lambda_i}\right) + \sum_{i=1}^n N(-\lambda_i \beta_i+\mu_i \alpha_i,\lambda_i\delta_i-\mu_i\gamma_i).
$$

If $\max(\lambda_1,\cdots,\lambda_n) \neq \lambda$, then $\lambda_i<\lambda\;(1\leq i \leq n)$. By Proposition \ref{prop-lcm-restriction}, $(\lambda_i,\beta_i) = (\alpha_i,\beta_i)\;(i=1,\cdots,n)$. This case is excluded by Step 2.

If there is only one of $\lambda_1, \cdots,\lambda_n$ equals $\lambda$. Assume $\lambda_i = \lambda$. By Property \ref{prop-parameters-identity} and Property \ref{prop-lcm-restriction}, we have $(\lambda_j,\mu_j) = (\alpha_j,\beta_j)\;(j \neq i)$ and $\mu_i/\lambda_i = - \sum_{j \neq i} \beta_j/\alpha_j$. This case is excluded by Step 2.

It remains that $\lambda>1$ and there exists $h>1$ and $k_1,k_2,\cdots,k_h \in \{1,\cdots,n\}$ such that $\lambda_{k_1},\lambda_{k_2},\cdots,\lambda_{k_h} = \lambda$. By Property \ref{prop-lcm-restriction}, for $i \not \in \{k_1,\cdots,k_h\}$ we have $(\lambda_i,\mu_i) = (\alpha_i,\beta_i)$. By Remark \ref{rem-Seifert-notation}, we have $\lambda_i \geq 2$. Then
$$
\begin{array}{rcl}
g^N(Z) &\geqslant& 2 + \lambda\left(n-2+2g - \sum_{i \in \{k_1,\cdots,k_h\}} \frac 1{\lambda_i} - \sum_{i \not\in \{k_1,\cdots,k_h\}} \frac 1{\lambda_i}\right)\\
&\geqslant& 2 + \lambda\cdot \left(n - 2 + 2g - \frac h{\lambda} - \frac{n-h}2\right)\\
&=&  2 + h \cdot\left(\frac \lambda 2 - 1\right) + \lambda \cdot \left(\frac n2  + 2g -2\right)
\end{array}
$$
We deal with $(g,\lambda)$ case by case.

Case 1, $g \geq 1$. When $\lambda \geq 2 g_{\max}$, we have $g^N(Z) \geqslant  2 + \lambda \cdot \frac n 2> g_{\max}$.

Case 2, $g = 0$ and $n\geq 4$. Now $g^N(Z) >  \lambda \cdot \left(\frac{n+h-4}2\right) - h > \frac h2 \cdot (\lambda -2) \geqslant \lambda-2$. When $\lambda >  g_{\max} + 2$ we have $g^N(Z) > g_{\max}$.

Case 3, $g = 0$ and $n = 3$. Now $h = 2$ or $h = 3$. If $h=2$, then $g^N(Z) \geqslant \frac \lambda 2$. When $\lambda > 2 g_{\max}$ we have $g^N(Z) > g_{\max}$. If $h = 3$, then $g^N(Z) \geqslant \lambda-1$. When $\lambda > g_{\max} + 1$ we have $g^N(Z) > g_{\max}$.

Case 4, $g=0$, $n \leq 2$. This case is the lens space and is known by \cite{BW}.
\end{proof}

\begin{proof}[Proof of Step 4]
According to $\alpha_i\delta_i - \beta_i\gamma_i=1$, by the extended Euclidean algorithm, we have $|\gamma_i|\leq K$, $|\delta_i| \leq K$, where $K = \max_{1 \leq i \leq n}(|\alpha_i|+|\beta_i|)$. By Theorem \ref{thm-genus-of-pseudo-horizontal-surface}, the non-orientable genus of the pseudo-horizontal surface is larger than $N(-\lambda_i \beta_i+\mu_i \alpha_i,\lambda_i\delta_i-\mu_i\gamma_i)$.

We apply Lemma \ref{lem-asymptotic-N} and Remark \ref{rem-asymptotic-N-negative}. Since $\lambda_i \leq 2g_{\max}+2$, the number $L$ in the lemma becomes $L = |\alpha_i|+|\gamma_i|+|\lambda_i\beta_i|+|\lambda_i\delta_i| \leq (4 g_{\max}+6)\cdot K$.

When $\alpha_i$ is even and $\mu_i > 8L^3$, there exist $t<8L^3$ such that $N(-\lambda_i \beta_i+\mu_i \alpha_i,\lambda_i\delta_i-\mu_i\gamma_i) = N(-\lambda_i \beta_i+(\mu_i+t) \alpha_i,\lambda_i\delta_i-(\mu_i+t)\gamma_i)$. We only need to try for $|\mu_i| < 16 L^3 = 16 \cdot [(4g_{\max}+6)\cdot K]^{3}$.

When $\alpha_i$ is odd and $|\mu_i|>(g_{\max} + 2) \cdot 8\cdot[(4g_{\max}+6)\cdot K]^{3}$, we have $N(-\lambda_i \beta_i+\mu_i \alpha_i,\lambda_i\delta_i-\mu_i\gamma_i) > g_{\max}$.
\end{proof}

\begin{Remark}
Our algorithm proposes an exact computation of the $\mathbb{Z}_2$-Thurston norms. Hence it is stronger than the method using Heegaard-Floer homology that only gives a lower bound. We have done some computation to compare the lower bounds to the exact values. Our algorithm shows the pseudo-horizontal surfaces presented in Example 5.1, Example 5.2, Example 5.3 and Example 5.4 are $\mathbb{Z}_2$-taut. The pseudo-vertical surfaces presented in Example 5.5 are also $\mathbb{Z}_2$-taut. For the $\mathbb{Z}_2$-homology class represented by these surfaces, using the algorithm in \cite{NW,WY} and an online program given by \cite{Ka}, when the parameters $m,n,n_2,n_3$ are not too large, we find the lower bounds of the $\mathbb{Z}_2$-Thurston norm meet the exact values. For the example in \cite[Proposition 7.4]{LRS} ($M=S^2((3,-1),(4,1),(6,1))$ and $H_2(M;\mathbb{Z}_2)=\mathbb{Z}_2$), the lower bound is 1, the exact value is $N(4,1)+N(6,1)-2=3$.
\end{Remark}

\end{document}